\documentclass[11pt,letterpaper]{amsart}
\usepackage[utf8]{inputenc}
\usepackage[T1]{fontenc}
\usepackage{amsmath,amssymb,graphicx,url}
\usepackage{xcolor}
\usepackage{hyperref}
\usepackage[margin=3cm]{geometry}
\usepackage{xargs}
\input xy
\xyoption{all}

\newtheorem{prop}{Proposition}[section]
\newtheorem{coro}[prop]{Corollary}
\newtheorem{thm}[prop]{Theorem}
\newtheorem{lemma}[prop]{Lemma}
\newtheorem{definition}[prop]{Definition}
\newtheorem{construction}[prop]{Construction}
\newtheorem{example}[prop]{Example}

\DeclareMathOperator{\Aut}{Aut}

\newcommand{\GG}{\mathcal{G}}

\newcommand{\KK}{\mathcal{K}}
\newcommand{\CC}{\mathcal{C}}
\newcommand{\TT}{\mathcal{T}}
\newcommand{\EXT}{\operatorname{Ext}}

\usepackage[loadshadowlibrary,shadow, colorinlistoftodos,textsize=tiny,obeyFinal]{todonotes}

\setuptodonotes{fancyline, backgroundcolor=gray!10,bordercolor=gray}

\newcommandx{\inline}[2][1=]{\todo[inline, #1]{#2}}
\newcounter{hw}
\newcommandx{\homework}[2][1=]{\todo[inline, caption={Homework \thehw} #1]{\stepcounter{hw} #2}}

\newcommandx{\claudioLong}[2][1=Long todo, usedefault]{\todo[inline, color=red!25,caption={\textbf{Claudio:} #1}]{\textbf{Claudio: #1}. #2}}
\newcommandx{\philippeLong}[2][1=Long todo, usedefault]{\todo[inline,color=purple!25, caption={\textbf{Philippe:} #1}]{\textbf{Philippe: #1}. #2} }

\begin{document}

\title[The $j$-diagonals twisting]{coset geometries acting on their elements: The $j$-diagonals twisting}

\author[Claudio Alexandre Piedade]{Claudio Alexandre Piedade}
\address[Claudio Alexandre Piedade]{Universit\'e Libre de Bruxelles, Brussels, Belgium}
\email{claudio.piedade@ulb.be}

\author[Philippe Tranchida]{Philippe Tranchida}
\address[Philippe Tranchida]{Universit\'e Libre de Bruxelles, Brussels, Belgium}
\email{tranchida.philippe@gmail.com}

\keywords{Coset geometry, Coxeter groups, hypertopes, twisting}
\thanks{\textit{Declaration of interests:} none}
\date{\today}

\begin{abstract}
Let $\beta$ be a coset incidence system and fix a type $j$. We use the orbits of the group of $\beta$ on pairs of distinct $j$-elements to define a Coxeter graph. The action on the $j$-elements induces an action on the corresponding Coxeter group, so the twisting construction  for coset incidence systems can be applied. The resulting coset geometry, called the $j$-diagonals twisting, is always a regular hypertope if $\beta$ is a regular hypertope. Using this, we show that finite regular hypertope whose diagram is a tree with all but one of the labels equal to four always exist. We also define an extension operation that combines this Coxeter graph with another given Coxeter graph. For regular polytopes, these constructions recover the twisting extensions of McMullen and Schulte. 
\end{abstract}

\maketitle

\section{Introduction}

Coset incidence systems are incidence systems built from a group $G$ and a collection of its subgroups $(G_i)_{i\in I}$~\cite{Tits1957,Tits1963geometries,buekenhout2013diagram}. 
Many combinatorial and geometrical objects can be realised in this form, such as abstract polytopes, vector and projective spaces, and Tits' buildings~\cite{ARP,Handbook,Tits1974}.

Given an operation on groups, there have been fruitful recent developments on how to extend these group operations to 
coset incidence system operations. For example, free products (with amalgamation) and HNN-extensions of coset incidence systems were defined in~\cite{PiedadeTranchida_FromGroupOps2026}.
The twisting operation, a type of semi-direct product, was defined in~\cite{PiedadeTranchida_FromGroupOps2026} and extended for a more general definition in~\cite{wreath}, which lead to a proper definition of a wreath product of coset incidence systems~\cite{wreath}.

In this article, we focus once more on the twisting operation. In order to use the twisting, the following data is necessary: two coset incidence systems, $\alpha$ and $\beta$, and an action $\eta$ of $\beta$ on $\alpha$ that satisfy some admissibility conditions, such as permute the parabolic subgroups and having an orbit intersection property.
We study two situations of the twisting where the action is natural and always satisfies the admissibility conditions. More precisely, given a coset incidence system $\beta = (B,(B_i)_{i\in I_\beta})$, we use the natural action of $B$ on the $j$-elements of $\beta$, for some type $j$, as a basis for the twisting. To do so, we construct a coset incidence system $\alpha= (A,(A_i)_{i\in I_\alpha})$ for a Coxeter group $A$ in such a way that its diagram has the same symmetries as the $j$-elements of $\beta$.
This construction always gives a natural action of $\beta$ on $\alpha$ which satisfies the admissibility conditions as long as $B$ acts flag-transitively on $\beta$.
Additionally, we extend this construction further for combinations of two Coxeter graphs, one of which is built from the action of $B$ on the $j$-elements of $\beta$.
Whenever $\beta$ is a regular hypertope, both of the above construction always result in a regular hypertope, whose diagrams we can control, and are described in Corollaries~\ref{coro:diagramtwistingGraph} and~\ref{coro:extensiontwistingGraph}.

These constructions, together with our previous works~\cite{PiedadeTranchida_FromGroupOps2026,wreath,halving}, allow us to construct many new regular hypertopes from known ones, and to have control on the diagrams. Moreover, the twisting of two regular hypertopes $\TT(\alpha,\beta)$ is finite if and only if both $\alpha$ and $\beta$ are finite. Hence, the tools developed here can help answering questions regarding the existence of a finite regular hypertope for a particular diagram. Indeed, one can ask the question: 
\begin{quote}
    ``Let $\mathcal{D}$ be a Coxeter diagram whose labels are all finite. Does there exists a finite regular hypertope whose diagram is $\mathcal{D}$?''
\end{quote}
Note that as soon as one edge label in equal to infinity, any regular hypertope for that diagram must be infinite as it will contain two involution whose product has infinite order.
A simple application of the twisting constructions can be used to positively answer this question whenever $\mathcal{D}$ is a tree whose edge labels are equal to $4$, except possibly for one edge label.

\subsection{Acknowledgments}
The first author is funded by an Action de Recherche Concert\'{e}e -- ARC -- from the Communaut\'{e}e Fran\c{c}aise Wallonie-Bruxelles and the second author is a Postdoctoral Researcher of the Fonds de la Recherche Scientifique -- FNRS.

\section{Background}\label{sec:prelims}

\subsection{Coset incidence systems and regular hypertopes}

An \emph{incidence system} is a quadruple $\Gamma=(X,I,*,t)$, where $X$ is a set of elements, $I$ is a set of types, $*$ is a symmetric and reflexive incidence relation, and $t:X\to I$ is a map, called the type map, such that two distinct incident elements have different types. A \emph{flag} of an incidence system is a set of pairwise incident elements, and a \emph{chamber} is a flag containing one element of every type. An incidence system is an \emph{incidence geometry} if every maximal flag is a chamber.

Let $B$ be a group and let $(B_i)_{i\in I_\beta}$ be a family of subgroups of $B$. The associated \emph{coset incidence system}
\[
\beta=(B,(B_i)_{i\in I_\beta})
\]
has the left cosets $bB_i$ as its elements of type $i$, and two cosets are defined to be incident whenever they have non-empty intersection. The group $B$ naturally acts on $\beta$ by left multiplication. For $J\subseteq I_\beta$, we write
$$B_J=\bigcap_{i\in J}B_i,
\qquad B_\emptyset=B.$$
The subgroups $B_i$ are called the \emph{maximal parabolic subgroups} of $\beta$.

Often, we are interested in incidence systems with many symmetries. The ones with full symmetries are called \textit{flag-transitive}. When the incidence system is expressed as a coset incidence system, flag-transitivity can be defined by a group theoretical condition.

\begin{thm}\cite[Proposition 2.2, adapted]{PiedadeTranchida_FromGroupOps2026}\label{thm:cosetFT}
Let $\beta = (B,(B_i)_{i\in I_\beta})$ be a coset incidence system. Then $B$ is flag-transitive on $\beta$ if and only if for every $J,H,K \subseteq I_\beta$ we have $B_H B_J \cap B_K B_J = (B_H\cap B_K ) B_J$.
Additionally, if $B$ is flag-transitive on $\beta$, then $\beta$ is a geometry.
\end{thm}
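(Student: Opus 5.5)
The plan is to prove both implications by relating flags to the base chamber $\{B_i : i\in I_\beta\}$. Flag-transitivity will mean that, for every $J\subseteq I_\beta$, each flag of type $J$ is the image under some $g\in B$ of the base flag $F_J=\{B_j : j\in J\}$. Two elementary observations will be used throughout. First, for $x\in B$ we have $xB_J=\bigcap_{j\in J}xB_j$. Second, two cosets $xB_i$ and $yB_k$ are incident if and only if $x^{-1}y\in B_iB_k$. In particular $B_i * xB_k$ if and only if $x\in B_iB_k$.

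\emph{($\Rightarrow$)} Assume $B$ is flag-transitive and fix $J,H,K$. The inclusion $(B_H\cap B_K)B_J\subseteq B_HB_J\cap B_KB_J$ is trivial, so let $x\in B_HB_J\cap B_KB_J$ and write $x=hb=kb'$ with $h\in B_H$, $k\in B_K$ and $b,b'\in B_J$.
\begin{itemize}
\item Consider the set $F=\{B_i : i\in H\cup K\}\cup\{xB_j : j\in J\}$. All the $xB_j$ contain $x$, so they are pairwise incident. For $i\in H$ the element $h$ lies in $B_i\cap xB_j$, and symmetrically for $i\in K$ the element $k$ lies in $B_i\cap xB_j$.
\item If a type $i$ lies in both $J$ and $H$, then $xB_i=hB_i=B_i$, and similarly for $K$. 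So there is no type clash, and $F$ is a flag of type $H\cup K\cup J$.
\item By flag-transitivity there is $g\in B$ with $gF_{H\cup K\cup J}=F$. Then $gB_i=B_i$ for all $i\in H\cup K$, which gives $g\in B_H\cap B_K$. Also $gB_j=xB_j$ for all $j\in J$, so $g^{-1}x\in B_J$.
\end{itemize}
Hence $x\in(B_H\cap B_K)B_J$.

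\emph{($\Leftarrow$)} Assume the identity for all $J,H,K$. First I will derive, by induction on $|J|$, that for every $k$,
\[
\bigcap_{j\in J}B_jB_k=B_JB_k .
\]
For the inductive step, write $J=J'\cup\{j\}$. Then $\bigcap_{j'\in J}B_{j'}B_k=B_{J'}B_k\cap B_jB_k$, and applying the hypothesis with $H=J'$, $K=\{j\}$ and the subgroup $B_k$ in the role of $B_J$ gives $(B_{J'}\cap B_j)B_k=B_JB_k$.

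Next I show by induction on $|J|$ that every flag of type $J$ is a $B$-image of $F_J$.
\begin{itemize}
\item Take a flag $F$ of type $J\cup\{k\}$. By induction, after translating we may assume its subflag of type $J$ is $F_J$, so $F=F_J\cup\{xB_k\}$.
\item Incidence of $xB_k$ with each $B_j$ means $x\in B_jB_k$ for all $j\in J$. By the displayed identity, $x=gb$ with $g\in B_J$ and $b\in B_k$.
\item Then $g^{-1}$ fixes $F_J$ and sends $xB_k$ to $B_k$, so $g^{-1}F=F_{J\cup\{k\}}$.
\end{itemize}
Since a chamber is a flag of type $I_\beta$, this gives transitivity on flags of every type, in particular on chambers.

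Finally, for the ``geometry'' statement, let $F$ be a maximal flag of type $J\neq I_\beta$. By flag-transitivity $F=gF_J$ for some $g\in B$. Then for any $i\notin J$ the set $F\cup\{gB_i\}$ is a larger flag, which contradicts maximality. Hence every maximal flag is a chamber.

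I expect the only delicate points to be bookkeeping. One is handling overlapping type sets in the ($\Rightarrow$) direction, which is settled by the observation $xB_i=B_i$ above. The other is making sure the induction in ($\Leftarrow$) applies the hypothesis with the roles of $J$ and $k$ assigned correctly.
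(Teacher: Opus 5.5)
Your proof is correct. The paper gives no proof of its own here, since it quotes the result from earlier work. Your route is the standard argument. For ($\Rightarrow$) you build the flag $\{B_i : i\in H\cup K\}\cup\{xB_j : j\in J\}$ and read off an element $g\in B_H\cap B_K$ with $g^{-1}x\in B_J$. For ($\Leftarrow$) you reduce the hypothesis to the classical criterion $\bigcap_{j\in J}B_jB_k=B_JB_k$ and then extend flags one type at a time.

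Your reading of flag-transitivity as transitivity on flags of every type, rather than only on chambers, is exactly what the statement needs. For example, take $B=\{1,a,b,ab\}\cong\mathbb{Z}_2^2$ with $B_1=\langle a\rangle$, $B_2=\langle b\rangle$, $B_3=\langle ab\rangle$, $B_4=\{1\}$. Then $B$ is regular on chambers, yet $B_1B_3\cap B_2B_3=B\neq(B_1\cap B_2)B_3$, and $\{B_1,B_2,aB_3\}$ is a maximal flag that is not a chamber.
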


Finally, a \emph{regular hypertope} is a thin, residually connected incidence geometry with a flag-transitive automorphism group. For a more precise definition of a hypertope, and for standard facts about coset geometries and regular hypertopes, we refer to reader to~\cite{buekenhout2013diagram,PiedadeTranchida_FromGroupOps2026,hypertopes}.

\subsection{Coxeter graphs}\label{sec:prelims:Coxeter}

A \emph{Coxeter graph} $\GG$ is a simple graph whose edges are labelled by elements of $\{3,4,\ldots\}\cup\{\infty\}$. For distinct vertices $s,t\in V(\GG)$, we set $m_{s,t}=m_{t,s}$ to be the edge label of $\{s,t\}$ if $s$ and $t$ are adjacent, and we set $m_{s,t}=2$ when they are not adjacent. The Coxeter group defined by $\GG$ is the group with presentation
\[
W(\GG)=\left\langle a_s\ (s\in V(\GG))\ \middle|\
 a_s^2=1,\ (a_sa_t)^{m_{s,t}}=1\text{ whenever }m_{s,t}<\infty
\right\rangle.
\]
Thus, an edge with label $2$ is omitted, and a label $\infty$ means that no relation is imposed on the corresponding product of generators.

For $s\in V(\GG)$, set
\[
W(\GG)_s=\langle a_t\mid t\in V(\GG)\setminus\{s\}\rangle.
\]
Hence, given a graph $\GG$ and its Coxeter group $W(\GG)$, we can construct the \emph{standard coset geometry} for $W(\GG)$:
\[
\bigl(W(\GG),(W(\GG)_s)_{s\in V(\GG)}\bigr)
\]
This coset geometry is always a regular hypertope; see, for example,~\cite{buekenhout2013diagram,ARP,Bourbaki2006}.

In general, the automorphism group $\Aut(\beta)$ of a regular hypertope $\beta$ is not a Coxeter group, but only a well-behaved quotient of a Coxeter group $W$. In particular, the group $\Aut(\beta)$ is always generated by involutions $\{\rho_0,\rho_1,\ldots,\rho_{n-1}\}$. We can then define the \textit{Coxeter diagram} of a regular hypertope $\beta$ to be the Coxeter graph $\mathcal{D}$ if the covering Coxeter group of $\Aut(\beta)$ is $W(\mathcal{D})$. More precisely, the Coxeter diagram of $\beta$ is a graph whose vertices correspond to generators the $\rho_0, \rho_1, \ldots, \rho_{n-1}$ of $\Aut(\beta)$ and where the label of the edge between $\rho_i$ and $\rho_j$ is given by the order of $\rho_i\rho_j$, for any $i \neq j =0,1,\ldots,n-1$. Here also, if an edge should receive the label $2$, it is instead omitted. This is a special case of the more general notion of \textit{Buekenhout diagram} of an incidence system.

\subsection{Twisting of coset incidence systems}

Let $\alpha=(A,(A_i)_{i\in I_\alpha})$ and $\beta=(B,(B_i)_{i\in I_\beta})$ be coset incidence systems, and let
\[
\eta:B\longrightarrow\Aut(A)
\]
be an action that permutes the maximal parabolic subgroups of $\alpha$. This action induces an action of $B$ on $I_\alpha$. Let $\mathcal{K}$ be the set of orbits of this action. For each $L\in\mathcal{K}$, choose a representative $F_L\in L$. If $M\subseteq I_\beta$, let
$$
\mathcal{O}_L(M)=B_{I_\beta\setminus M}\cdot F_L.
$$
Following~\cite[Definition~3.8]{wreath}, the system $\alpha$ is \emph{$(\beta,\eta)$-admissible} when the action permutes its maximal parabolic subgroups and, for every $L\in\mathcal{K}$ and every $M,N\subseteq I_\beta$,
\begin{equation}\label{eq:IPO}
\mathcal{O}_L(M)\cap\mathcal{O}_L(N)=\mathcal{O}_L(M\cap N).
\end{equation}

\begin{definition}[Twisting of $\alpha$ by $\beta$]\cite{wreath}\label{def:gen_twist}
    Suppose that $\alpha=(A,(A_i)_{i\in I_\alpha})$ and $\beta=(B,(B_i)_{i\in I_\beta})$ are coset incidence systems and that $B$ acts on $A$ such that $\alpha$ is $(\beta,\eta)$-admissible. Then, the \textit{twisting} of $\alpha$ by $\beta$, with respect to $\eta$ and a choice of representative $\{F_L\}_{L \in K}$ for the orbits, is the coset incidence system $\TT(\alpha,\beta) = (G,(G_i)_{i \in I})$, where
    the maximal parabolic subgroups $(G_i)_{i \in I}$ are:

\begin{equation}
 G_i= \begin{cases}
     A_{(\cup_{L\in K}(L \setminus \mathcal{O}_L^i))} \rtimes B_i, & \text{if $i \in I_\beta$}.\\
     A_i \rtimes B, & \text{if $i \in K$};
  \end{cases}
\end{equation}
where $\mathcal{O}_L^i:=\mathcal{O}_L(I_\beta\setminus \{i\})=B_i\cdot F_L$.
\end{definition}

We use the following results throughout the paper.

\begin{thm}[{\cite[Corollary~3.17]{wreath}}]\label{thm:twisting-hypertope}
Let $\alpha$ and $\beta$ be regular hypertopes. If $\alpha$ is $(\beta,\eta)$-admissible, then the twisting $\TT(\alpha,\beta)$ is a regular hypertope. Moreover, $\TT(\alpha,\beta)$ has finitely many elements if and only if both $\alpha$ and $\beta$ have finitely many elements.
\end{thm}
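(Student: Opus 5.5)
The plan is to work entirely inside the semidirect product $G=A\rtimes B$ and reduce each defining property of a regular hypertope (flag-transitivity, thinness, residual connectedness) to the corresponding property of $\alpha$ and $\beta$. Admissibility is what makes this reduction possible.

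\textbf{Step 1: parabolic subgroups.} I would first describe all parabolic subgroups $G_J$ for $J\subseteq I=I_\beta\sqcup\KK$. Write $J=J_\beta\sqcup J_\KK$. Intersecting the defining subgroups gives
\[
G_J=A_{S(J)}\rtimes B_{J_\beta},
\]
where $S(J)\subseteq I_\alpha$ is the union of $J_\KK$ with the sets $\bigcup_L (L\setminus\mathcal{O}_L^i)$ for $i\in J_\beta$. The orbit intersection property \eqref{eq:IPO} is exactly what lets us write this as
\[
S(J)=J_\KK\cup\bigcup_{L}\bigl(L\setminus \mathcal{O}_L(I_\beta\setminus J_\beta)\bigr).
\]
The point is that intersecting complements of $\mathcal{O}_L(I_\beta\setminus\{i\})$ over $i\in J_\beta$ behaves like $\mathcal{O}_L$ of an intersection. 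I also need that $A_{S(J)}$ is normalized by $B_{J_\beta}$. This holds because $B_{J_\beta}$ stabilizes each $\mathcal{O}_L(I_\beta\setminus J_\beta)=B_{J_\beta}\cdot F_L$ and permutes the $A_k$.

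\textbf{Step 2: flag-transitivity.} I would verify the criterion of Theorem~\ref{thm:cosetFT},
\[
G_HG_J\cap G_KG_J=(G_H\cap G_K)G_J .
\]
Using the semidirect product decomposition, a product $G_HG_J$ equals $A_{S(H)}\,{}^{B_{H_\beta}}\!A_{S(J)}\,B_{H_\beta}B_{J_\beta}$, suitably reorganized. The $B$-coordinate then reduces to the flag-transitivity identity for $\beta$. The $A$-coordinate reduces to the identity for $\alpha$ applied to the sets $S(\cdot)$, after twisting by elements of $B$ that permute the $A_k$. Again \eqref{eq:IPO} guarantees that $S(H)\cap S(K)$ equals $S(H\cap K)$ up to the appropriate $B$-translate. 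By Theorem~\ref{thm:cosetFT}, $\TT(\alpha,\beta)$ is then a flag-transitive geometry.

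\textbf{Step 3: thinness and residual connectedness.} For thinness, I would compute the stabilizer of a chamber, which is $G_I$. I would then show that each $G_{I\setminus\{i\}}$ contains $G_I$ with index $2$. If $i\in\KK$, this comes from $[A_{I_\alpha\setminus\{k\}}:A_{I_\alpha}]=2$ in $\alpha$. If $i\in I_\beta$, it comes from $[B_{I_\beta\setminus\{i\}}:B_{I_\beta}]=2$, with the $A$-part unchanged because $\mathcal{O}_L(\{i\})=\{F_L\}$.

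For residual connectedness in a flag-transitive coset geometry, it suffices to show that each $G_J$ is generated by the $G_{J\cup\{k\}}$ for $k\notin J$ (when $|I\setminus J|\ge2$). Here I would generate the $B_{J_\beta}$-part using residual connectedness of $\beta$. I would generate the $A_{S(J)}$-part using residual connectedness of $\alpha$, together with conjugation by $B_{J_\beta}$ to move between the $A_k$ in a single orbit.

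\textbf{Step 4: finiteness.} Since $G$ acts regularly on chambers of a thin flag-transitive geometry, finitely many elements is equivalent to $|G|<\infty$. This in turn holds iff $|A|,|B|<\infty$, which is equivalent to $\alpha$ and $\beta$ having finitely many elements, since $I$ is finite.

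\textbf{Main obstacle.} I expect Step 2 to be the main obstacle. Reorganizing products of semidirect-product parabolics requires tracking how $B$-elements permute the indices of $A$. This is precisely where condition \eqref{eq:IPO} must be invoked with care, most likely via a double induction on $|J_\beta|$.
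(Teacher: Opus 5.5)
The paper does not prove this statement; it quotes it from \cite[Corollary~3.17]{wreath}, so there is no in-text argument to compare yours with. Your strategy is the natural direct one: compute the parabolics of $G=A\rtimes B$, then reduce flag-transitivity, thinness and residual connectedness to $\alpha$ and $\beta$. Step~1 and the residual-connectedness sketch are fine. Steps~2 and~3 are not right as written.

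\textbf{Step 2.} The identity you attribute to \eqref{eq:IPO}, that $S(H)\cap S(K)=S(H\cap K)$ up to a $B$-translate, is false in general. For example, take $\beta$ a triangle, $j=0$, $H=\{0\}$, $K=\{1\}$: the left side is a single vertex and the right side is $\emptyset$. It is also not what you need. Since $G_H\cap G_K=G_{H\cup K}$ and $A_S\cap A_T=A_{S\cup T}$, you need $S(H)\cup S(K)=S(H\cup K)$. This holds exactly. On an orbit $L$ lying in neither $H$ nor $K$ it says $B_{H_\beta}F_L\cap B_{K_\beta}F_L=B_{H_\beta\cup K_\beta}F_L$, which is \eqref{eq:IPO} with $M=I_\beta\setminus H_\beta$ and $N=I_\beta\setminus K_\beta$.

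The idea hidden in your ``suitably reorganized'' is that $G_HG_J$ is not an $A$-set times a $B$-set. You must work fibrewise over $B$. If $b=hc$ with $h\in B_{H_\beta}$ and $c\in B_{J_\beta}$, then $hA_{S(J)}c=A_{hS(J)}\,b$. Moreover $hS(J)=bS(J)$, because $B_{J_\beta}$ stabilises $S(J)$. Hence
\[
G_HG_J=\{\,ab \;:\; b\in B_{H_\beta}B_{J_\beta},\ a\in A_{S(H)}A_{bS(J)}\,\}.
\]
Now intersect the fibres. The condition on $b$ is Theorem~\ref{thm:cosetFT} for $\beta$. Over each admissible $b$, Theorem~\ref{thm:cosetFT} for $\alpha$, applied to the index sets $S(H)$, $S(K)$, $bS(J)$, gives $A_{S(H)\cup S(K)}A_{bS(J)}=A_{S(H\cup K)}A_{bS(J)}$. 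This is exactly the fibre of $G_{H\cup K}G_J$ over $b$. No induction on $|J_\beta|$ is needed.

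\textbf{Steps 3--4.} You attach the orbit condition to the wrong case.

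For $i\in I_\beta$, the set $I\setminus\{i\}$ contains every orbit, so $S(I\setminus\{i\})=I_\alpha$ regardless of $\mathcal{O}_L$. In fact $\mathcal{O}_L(\{i\})=B_{I_\beta\setminus\{i\}}F_L$ usually has two elements; in the $j$-diagonal case $\mathcal{O}(\{j\})=\{B_j,b_jB_j\}$.

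For $L\in\KK$ one gets $G_{I\setminus\{L\}}=A_{I_\alpha\setminus B_{I_\beta}F_L}\rtimes B_{I_\beta}$. So what you need is $\mathcal{O}_L(\emptyset)=B_{I_\beta}F_L=\{F_L\}$. Admissibility does not give this. It comes from the convention $B_{I_\beta}=1$, i.e.\ the group of $\beta$ acts regularly on chambers.

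Step~4 silently uses the same convention, together with $A_{I_\alpha}=1$, since $G_I=A_{I_\alpha}\rtimes B_{I_\beta}$. Thinness and flag-transitivity of a coset geometry do not by themselves make $G$ act faithfully.

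State this convention explicitly, because the theorem fails without it. Take $B=B'\times\langle z\rangle$ with $B_i=B_i'\times\langle z\rangle$, so $z$ acts trivially on $\beta$, and let $z$ swap the two types of a digon $\alpha$. Then \eqref{eq:IPO} holds trivially, but $[G_{I\setminus\{L\}}:G_I]=4$, so the twisting is not thin.
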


Sometimes, we would like to prove $(\beta,\eta)$-admissibility on quotients of $A$. Let $M\leq A$. We say \emph{$M$ is closed under $\eta \colon B \to \Aut(A)$} if for all $b\in B$ we have $\eta(b)(M)= M$.

\begin{lemma}\cite[Lemma 5.8]{PiedadeTranchida_FromGroupOps2026}\label{lem:normalclosureClosedUnderGraphAut}
    Let $M\leq A$.
    If $M$ is closed under $\eta$, then the normal closure $\langle\langle M\rangle\rangle_A$ is also closed under $\eta$.
\end{lemma}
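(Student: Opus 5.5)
Since $M$ is closed under $\eta$, for every $b\in B$ the map $\eta(b)$ is an automorphism of $A$ with $\eta(b)(M)=M$. The natural approach is to show directly that $\eta(b)$ sends each generator of $\langle\langle M\rangle\rangle_A$ back into $\langle\langle M\rangle\rangle_A$, and then use that $\eta(b^{-1})$ is also an automorphism satisfying the same hypothesis to upgrade inclusion to equality.

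First, recall that $\langle\langle M\rangle\rangle_A$ is the subgroup generated by all conjugates $ama^{-1}$ with $a\in A$ and $m\in M$. Fix $b\in B$ and write $\phi=\eta(b)$. Since $\phi$ is a homomorphism,
\[
\phi(ama^{-1})=\phi(a)\,\phi(m)\,\phi(a)^{-1}.
\]
Here $\phi(m)\in\phi(M)=M$ and $\phi(a)\in A$, so $\phi(ama^{-1})$ is again a conjugate of an element of $M$ by an element of $A$. It therefore lies in $\langle\langle M\rangle\rangle_A$.

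Because $\phi$ is a homomorphism, the image of the subgroup generated by these conjugates is generated by their images. Hence $\phi(\langle\langle M\rangle\rangle_A)\subseteq\langle\langle M\rangle\rangle_A$.

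For the reverse inclusion, apply the same argument to $\phi^{-1}=\eta(b^{-1})$. This is legitimate because $\eta$ is an action, and $b^{-1}\in B$ also satisfies $\eta(b^{-1})(M)=M$ by hypothesis. We get $\eta(b^{-1})(\langle\langle M\rangle\rangle_A)\subseteq\langle\langle M\rangle\rangle_A$. Applying $\phi$ to both sides gives $\langle\langle M\rangle\rangle_A\subseteq\phi(\langle\langle M\rangle\rangle_A)$, so equality holds for every $b\in B$.

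There is no real obstacle here. The only point needing care is the reverse inclusion: it relies on $\eta(b)$ being an automorphism of $A$ rather than merely an endomorphism, and on the closure hypothesis holding for all elements of $B$, in particular for inverses.
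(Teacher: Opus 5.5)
Your proof is correct and uses the standard argument: $\eta(b)$ sends each conjugate $ama^{-1}$ to a conjugate of an element of $M$, and applying the same reasoning to $\eta(b)^{-1}$ gives equality. The paper does not reprove this lemma but cites it from earlier work. One small refinement: the reverse inclusion does not need $\eta$ to be an action, since $\eta(b)(M)=M$ with $\eta(b)$ bijective already gives $\eta(b)^{-1}(M)=M$.
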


\begin{lemma}\cite[adapted from Proposition 5.10]{PiedadeTranchida_FromGroupOps2026}\label{lem:quotient-admissible}
    Let $M\leq A$ be a subgroup closed under $\eta$ and $N = \langle\langle M\rangle\rangle_A$.
    If $\alpha$ is $(\beta,\eta)$-admissible, then $\tilde{\alpha}=(\tilde{A},(\tilde{A}_i)_{i\in I_\alpha})$ is $(\beta,\eta)$-admissible, where $\tilde{A}=A/N$ and $\tilde{A_i} = A_iN/N$.
\end{lemma}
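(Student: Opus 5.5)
The plan is to push the action $\eta$ down to the quotient $\tilde A=A/N$, and then to observe that the intersection property \eqref{eq:IPO} depends only on the action of $B$ on the type set $I_\alpha$. That action does not change when we pass to the quotient.

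First, by Lemma~\ref{lem:normalclosureClosedUnderGraphAut}, since $M$ is closed under $\eta$, the normal closure $N=\langle\langle M\rangle\rangle_A$ is also closed under $\eta$, i.e.\ $\eta(b)(N)=N$ for all $b\in B$. I will define $\tilde\eta\colon B\to\Aut(\tilde A)$ by
\[
\tilde\eta(b)(aN)=\eta(b)(a)N .
\]
Well-definedness is the only point that needs care. If $aN=a'N$, then $a^{-1}a'\in N$, so $\eta(b)(a)^{-1}\eta(b)(a')=\eta(b)(a^{-1}a')\in\eta(b)(N)=N$. The map $\tilde\eta(b)$ is a homomorphism because $\eta(b)$ is. It is bijective, with inverse $\tilde\eta(b^{-1})$, since $\eta(b^{-1})$ also preserves $N$. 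Finally $\tilde\eta(bb')=\tilde\eta(b)\tilde\eta(b')$ because $\eta$ is an action. Hence $\tilde\eta$ is an action of $B$ on $\tilde A$ by automorphisms.

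Next, I check that $\tilde\eta$ permutes the maximal parabolic subgroups of $\tilde\alpha$, compatibly with the original action on types. If $\eta(b)(A_i)=A_{b\cdot i}$, then
\[
\tilde\eta(b)(\tilde A_i)=\eta(b)(A_i)\,\eta(b)(N)/N=A_{b\cdot i}N/N=\tilde A_{b\cdot i}.
\]
So we may take the induced action of $B$ on $I_\alpha$ to be exactly the same as for $\alpha$.

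I expect one subtlety here, and it is the main thing to state explicitly. In the quotient, distinct types $i\neq i'$ might have $\tilde A_i=\tilde A_{i'}$. The action on $I_\alpha$ must therefore be taken to be the given action on the index set (which is well defined), not one read off from the subgroups themselves. With this convention:
\begin{itemize}
\item the orbit set $\mathcal K$ is unchanged;
\item we may keep the same representatives $F_L$;
\item the sets $\mathcal O_L(M)=B_{I_\beta\setminus M}\cdot F_L$ are literally the same subsets of $I_\alpha$, since they involve only $B$, the subgroups $B_J$ of $\beta$, and the action on $I_\alpha$.
\end{itemize}

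Consequently, condition \eqref{eq:IPO} for $\tilde\alpha$ is the very same set of equalities as for $\alpha$, and it holds by the $(\beta,\eta)$-admissibility of $\alpha$. Together with the fact that $\tilde\eta$ permutes the maximal parabolic subgroups, this shows that $\tilde\alpha$ is $(\beta,\tilde\eta)$-admissible. This is what the statement means by $(\beta,\eta)$-admissible, with $\eta$ denoting the induced action.
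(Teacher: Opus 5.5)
Your proposal is correct. The paper does not reprove this lemma; it imports it from the earlier work and places Lemma~\ref{lem:normalclosureClosedUnderGraphAut} just before it to supply the descent step. Your argument follows exactly that intended route: push $\eta$ down to $A/N$ using $\eta$-invariance of $N$, check $\tilde\eta(b)(\tilde A_i)=\tilde A_{b\cdot i}$, and observe that \eqref{eq:IPO} depends only on the unchanged action of $B$ on $I_\alpha$. Your explicit convention of keeping the original action on the index set, in case distinct $\tilde A_i$ coincide in the quotient, is a sensible clarification rather than a deviation.
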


\section{The $j$-diagonals Twisting}\label{sec:twisting}

Let $\beta=(B,(B_i)_{i\in I_\beta})$ be a coset incidence system and fix $j\in I_\beta$. We write $X_j=\{\sigma B_j\mid \sigma \in B\}$
for the set of $j$-elements of $\beta$. We will use the natural action of $B$ on this set $X_j$ to construct a graph on which $B$ acts.

\subsection{Diagonal classes and their Coxeter graphs}\label{subsec:diagonals}

We start by defining diagonal classes and prove an elementary result about them. 

\begin{definition}[Diagonals and diagonal classes]\label{def:diagonal}
A \emph{$j$-diagonal} of $\beta$ is a an unordered pair $\{F_1,F_2\}$ of distinct elements of $X_j$.

Two $j$-diagonals $\{F_1,F_2\}$ and $\{F_3,F_4\}$ are \emph{equivalent} if there is some $b\in B$ such that
\[
\{F_3,F_4\}=\{bF_1,bF_2\}.
\]
The equivalence classes are called \emph{$j$-diagonal classes}. The class containing $\{F_1,F_2\}$ is denoted by $[F_1,F_2]$, and the set of all $j$-diagonal classes is denoted by $\CC^j_\beta$.
\end{definition}

Every $j$-diagonal can be written as $\{\sigma_1B_j,\sigma_2B_j\}$ with $\sigma_1B_j\neq\sigma_2B_j$. Since $B$ is transitive on $X_j$, every diagonal class has a representative of the form $\{B_j,\sigma B_j\}$.

\begin{lemma}\label{lem:double-cosets}
Every $j$-diagonal class has a representative
$\{B_j,\tau B_j\}, \tau\notin B_j$.
Moreover, for $\sigma_1,\sigma_2\notin B_j$, the diagonals $\{B_j,\sigma_1B_j\}$ and
$\{B_j,\sigma_2B_j\}$
belong to the same diagonal class if and only if
\begin{equation}\label{eq:doublecosetdiagonal}
\sigma_2\in B_j\sigma_1B_j\cup B_j\sigma_1^{-1}B_j.
\end{equation}
\end{lemma}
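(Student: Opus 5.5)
The plan is to argue directly from the definition of equivalence via the left-multiplication action of $B$ on $X_j$. The first claim is essentially already observed before the statement. Take any diagonal $\{\sigma_1B_j,\sigma_2B_j\}$ and apply $b=\sigma_1^{-1}$. This sends it to the equivalent diagonal $\{B_j,\sigma_1^{-1}\sigma_2B_j\}$. Since $\sigma_1B_j\neq\sigma_2B_j$, the element $\tau=\sigma_1^{-1}\sigma_2$ does not lie in $B_j$.

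For the characterization, I would first prove the ``if'' direction by exhibiting explicit elements of $B$.

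\emph{Case $\sigma_2=x\sigma_1y$ with $x,y\in B_j$.} Apply $b=x$. Then $xB_j=B_j$ and $x\sigma_1B_j=x\sigma_1yB_j=\sigma_2B_j$, so $b$ maps $\{B_j,\sigma_1B_j\}$ to $\{B_j,\sigma_2B_j\}$.

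\emph{Case $\sigma_2=x\sigma_1^{-1}y$ with $x,y\in B_j$.} Apply $b=x\sigma_1^{-1}$. Then $bB_j=x\sigma_1^{-1}B_j=\sigma_2B_j$ and $b\sigma_1B_j=xB_j=B_j$. So the pair is mapped to $\{\sigma_2B_j,B_j\}$, which is the same unordered pair.

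For the ``only if'' direction, suppose $b\in B$ satisfies $\{bB_j,b\sigma_1B_j\}=\{B_j,\sigma_2B_j\}$. I would split according to where $B_j$ is sent.

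\emph{Subcase $bB_j=B_j$.} Then $b\in B_j$, and $b\sigma_1B_j=\sigma_2B_j$ gives $\sigma_2\in b\sigma_1B_j\subseteq B_j\sigma_1B_j$.

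\emph{Subcase $bB_j=\sigma_2B_j$ and $b\sigma_1B_j=B_j$.} Then $b\sigma_1\in B_j$, so $b\in B_j\sigma_1^{-1}$. Writing $b=x\sigma_1^{-1}$ with $x\in B_j$, we get $\sigma_2\in bB_j=x\sigma_1^{-1}B_j\subseteq B_j\sigma_1^{-1}B_j$.

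These two subcases are exhaustive because the map is a bijection of unordered pairs: the image of $B_j$ is either $B_j$ or $\sigma_2B_j$. If it is $\sigma_2B_j$, then $b\sigma_1B_j$ must equal $B_j$.

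There is no real obstacle in this argument. The only points needing care are bookkeeping the unordered-pair swap and using the hypothesis $\sigma_i\notin B_j$ so that the diagonals are genuinely pairs of distinct elements. Transitivity of $B$ on $X_j$ is automatic, since $X_j$ consists of the left cosets of $B_j$.
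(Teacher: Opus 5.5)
Your proposal is correct and follows the paper's argument. Both translate by $\sigma_1^{-1}$ for the first claim, split on whether the group element fixes $B_j$ or swaps the two cosets, and use $x$ and $x\sigma_1^{-1}$ (with $x\in B_j$) as the explicit elements for the converse.
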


\begin{proof}
Let $D=\{\sigma_1B_j,\sigma_2B_j\}$ be a $j$-diagonal. Multiplication by $\sigma_1^{-1}$ sends $D$ to
$\{B_j,\sigma_1^{-1}\sigma_2B_j\}$.
The element $\sigma_1^{-1}\sigma_2$ does not belong to $B_j$, since the two cosets in $D$ are distinct.

Now let $
D_1=\{B_j,\sigma_1B_j\}$ and
$D_2=\{B_j,\sigma_2B_j\}$. Suppose first that $\mu D_1=D_2$ for some $ \mu\in B$. Since the pairs are unordered, there are two cases. If $\mu B_j=B_j$, then $\mu \in B_j$ and $ \mu \sigma_1B_j=\sigma_2B_j$, so $\sigma_2\in B_j\sigma_1B_j$. If instead $\mu B_j=\sigma_2B_j$ and $\mu \sigma_1B_j=B_j$, then $\mu=\sigma_2\delta=\gamma\sigma_1^{-1}$ for some $\delta,\gamma\in B_j$. Hence $\sigma_2=\gamma\sigma_1^{-1}\delta^{-1}\in B_j\sigma_1^{-1}B_j$.

Conversely, suppose that~\eqref{eq:doublecosetdiagonal} holds. If $\sigma_2=\mu_1\sigma_1\mu_2$ with $\mu_1,\mu_2\in B_j$, then $\mu_1D_1=D_2$. If $\sigma_2=\mu_1\sigma_1^{-1}\mu_2$, then $\mu_1\sigma_1^{-1}D_1=D_2$, with the two entries interchanged. Thus $D_1$ and $D_2$ are equivalent.
\end{proof}

We now define a Coxeter graph from these diagonal classes.

\begin{construction}[$j$-diagonal Coxeter graph]\label{const:graphFromCosetIncidence}
Assume that $\beta=(B,(B_i)_{i\in I_\beta})$ is a coset incidence system and that its $j$-elements set $X_j$ is finite and has more than one element. Let
\[
\lambda:\CC^j_\beta\longrightarrow \{2,3,\ldots\}\cup\{\infty\}
\]
be a label function. The \emph{$j$-diagonal Coxeter graph} $\mathcal{D}=\mathcal{D}(\beta,j,\lambda)$ is defined as follows:
\begin{itemize}
    \item $V(\mathcal{D})=X_j$;
    \item two distinct vertices $F_1,F_2\in X_j$ are joined by an edge if and only if $\lambda([F_1,F_2])\neq 2$;
    \item when this edge is present, its label is $\lambda([F_1,F_2])$.
\end{itemize}
\end{construction}

The graph has no loops and is in fact a Coxeter graph, as defined in Section \ref{sec:prelims:Coxeter}. The corresponding Coxeter group is

\begin{equation*}
W(\mathcal{D})=\left\langle a_F\ \middle|\
 a_F^2=1,\ (a_Fa_E)^{\lambda([F,E])}=1
 \text{ if }F\neq E\text{ and }\lambda([F,E])<\infty
\right\rangle
\end{equation*}
where the indices $F$ and $E$ run through elements of $X_j$.
\begin{lemma}\label{lem:gammaHomoGraph}
The map $\gamma:B\longrightarrow\Aut(\mathcal{D})$ such that $\gamma(b)(xB_j)=bxB_j$
is a well-defined action of $B$ by label-preserving graph automorphisms.
\end{lemma}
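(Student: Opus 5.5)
The plan is to check the three things the statement packs together, in order: that $\gamma(b)$ is a well-defined map on vertices, that it is a label-preserving graph automorphism, and that $\gamma$ is a homomorphism. None of these should need anything beyond the definitions and Lemma~\ref{lem:double-cosets}.

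\emph{Well-definedness.} The vertex set of $\mathcal{D}$ is $X_j$. The map $\gamma(b)$ is just left multiplication on cosets: if $xB_j=yB_j$, then $bxB_j=byB_j$. So $\gamma(b)$ is a well-defined map $X_j\to X_j$. It is a bijection, since left multiplication by $b^{-1}$ is an inverse. The same remark gives the action axioms directly:
\[
\gamma(b_1b_2)(xB_j)=b_1b_2xB_j=\gamma(b_1)\bigl(\gamma(b_2)(xB_j)\bigr),
\]
and $\gamma(1)$ is the identity. Hence $\gamma$ is a homomorphism from $B$ into the symmetric group on $X_j$.

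\emph{Automorphism and label preservation.} The main point, which is routine but is where the definition of $\mathcal{D}$ enters, is that each $\gamma(b)$ preserves adjacency and labels. Let $F_1\neq F_2$ be vertices. Then $bF_1\neq bF_2$ by injectivity, so $\{bF_1,bF_2\}$ is again a $j$-diagonal. By Definition~\ref{def:diagonal} it is equivalent to $\{F_1,F_2\}$, witnessed by $b$ itself, so
\[
[bF_1,bF_2]=[F_1,F_2].
\]
Consequently $\lambda([bF_1,bF_2])=\lambda([F_1,F_2])$. Since Construction~\ref{const:graphFromCosetIncidence} defines both the presence of an edge and its label solely through $\lambda$ of the diagonal class, $F_1$ and $F_2$ are adjacent if and only if $bF_1$ and $bF_2$ are, and the labels agree.

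\emph{Conclusion.} Thus $\gamma(b)\in\Aut(\mathcal{D})$, with inverse $\gamma(b^{-1})$, and $\gamma$ is a homomorphism into the label-preserving automorphisms. There is no real obstacle: the only step that requires care is the label preservation, and the diagonal classes were defined precisely as $B$-orbits so that it holds tautologically.
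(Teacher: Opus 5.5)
Your proposal is correct and follows the paper's proof. In both, the key point is that $\{bF_1,bF_2\}$ lies in the same diagonal class as $\{F_1,F_2\}$ (witnessed by $b$ itself), so edges and labels are preserved, and the homomorphism property is just $b_1(b_2xB_j)=(b_1b_2)xB_j$; you additionally spell out well-definedness on cosets, bijectivity via $\gamma(b^{-1})$, and $bF_1\neq bF_2$, which the paper leaves implicit, and your appeal to Lemma~\ref{lem:double-cosets} is unnecessary since you never use it.
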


\begin{proof}
If $\{F_1,F_2\}$ is a $j$-diagonal, then $\{bF_1,bF_2\}$ lies in the same diagonal class. Hence the two pairs receive the same label, so $\gamma(b)$ preserves the edges and their labels. Moreover, we have
$$
\gamma(b_1)\bigl(\gamma(b_2)(xB_j)\bigr)
=b_1b_2xB_j
=\gamma(b_1b_2)(xB_j),
$$
so $\gamma$ is a homomorphism.
\end{proof}

Note that the kernel of $\gamma$ is the core of $B_j$ in $B$. In particular, the action is faithful when $B_j$ is core-free.

Every label-preserving automorphism $\varphi$ of the graph $\mathcal{D}$ induces an automorphism of the group $A=W(\mathcal{D})$ by
$$
a_F\longmapsto a_{\varphi(F)}.
$$
Indeed, this map preserves the Coxeter presentation of $W(\mathcal{D})$ describe above. We therefore obtain a homomorphism
$$
\varphi:\Aut(\mathcal{D})\longrightarrow\Aut(A)
$$
and an action
\begin{equation}\label{eq:eta}
\eta=\varphi\circ\gamma:B\longrightarrow\Aut(A).
\end{equation}

Let
$\alpha=(A,(A_F)_{F\in X_j})$, where $A_F=\langle a_E\mid E\in X_j\setminus\{F\}\rangle$,
be the standard coset incidence system of the Coxeter group $A = W(\mathcal{D})$. We now investigate when the action $\alpha$ is $(\beta,\varphi)$-admissible. It turns out that only the orbit intersection property has to be verified, and that it always holds if $B$ acts flag-transitively on $\beta$.

\begin{thm}\label{thm:admissibility-diagonal}
The action $\eta$ permutes the maximal parabolic subgroups of $\alpha$, and its action on the type set $X_j$ is transitive. Let $F_L=B_j$ be the choice of base point for the unique orbit $F_L$. Then $\alpha$ is $(\beta,\eta)$-admissible if and only if
\begin{equation}\label{eq:IPO-diagonal}
\mathcal O(M)\cap\mathcal O(N)=\mathcal O(M\cap N)
\qquad(M,N\subseteq I_\beta),
\end{equation}
where
$\mathcal O(M)=B_{I_\beta\setminus M}\cdot F_L.$
Moreover, if $B$ is flag-transitive in $\beta$, then $\alpha$ is always $(\beta,\eta)$-admissible.
\end{thm}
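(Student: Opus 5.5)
The argument has three parts: the permutation property and transitivity, the reduction of admissibility to \eqref{eq:IPO-diagonal}, and the verification of \eqref{eq:IPO-diagonal} under flag-transitivity.

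\textbf{Permutation and transitivity.} For $b\in B$, the automorphism $\eta(b)$ sends each generator $a_E$ to $a_{bE}$. Since $E\mapsto bE$ is a bijection of $X_j$, we get
\[
\eta(b)(A_F)=\langle a_{bE}\mid E\in X_j\setminus\{F\}\rangle=\langle a_{E'}\mid E'\in X_j\setminus\{bF\}\rangle=A_{bF}.
\]
So $\eta$ permutes the maximal parabolic subgroups of $\alpha$, and the induced action on the type set $I_\alpha=X_j$ is the natural action $F\mapsto bF$. That action is transitive because every $j$-element has the form $\sigma B_j=\sigma\cdot B_j$. Hence $\mathcal K$ has exactly one orbit $L=X_j$, and we may take $F_L=B_j$.

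\textbf{Reduction to \eqref{eq:IPO-diagonal}.} The admissibility definition consists of the permutation property together with \eqref{eq:IPO} for every orbit $L\in\mathcal K$. The permutation property was just established, and there is only one orbit. So \eqref{eq:IPO} reduces literally to \eqref{eq:IPO-diagonal}, which proves the equivalence.

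\textbf{Flag-transitive case.} Here we must show $\mathcal O(M)\cap\mathcal O(N)=\mathcal O(M\cap N)$. Set $H=I_\beta\setminus M$ and $K=I_\beta\setminus N$, so that $I_\beta\setminus(M\cap N)=H\cup K$ and $B_{H\cup K}=B_H\cap B_K$.

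The orbit $B_H\cdot B_j$ consists of the cosets $xB_j$ with $x\in B_HB_j$. Therefore $xB_j$ lies in $\mathcal O(M)\cap\mathcal O(N)$ if and only if $x\in B_HB_j\cap B_KB_j$; this set is a union of left cosets of $B_j$, so membership is well defined. By Theorem~\ref{thm:cosetFT} applied with $J=\{j\}$, flag-transitivity gives
\[
B_HB_j\cap B_KB_j=(B_H\cap B_K)B_j=B_{H\cup K}B_j.
\]
The right-hand side is exactly the set of representatives of the cosets in $\mathcal O(M\cap N)$, which gives the equality.

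The only delicate point is the translation between orbits of cosets and products of subgroups, in particular the conventions $B_\emptyset=B$ and the case $M=I_\beta$. The identity $B_{H\cup K}=B_H\cap B_K$ holds uniformly, including these edge cases, so no separate argument is needed. I expect no real obstacle beyond this bookkeeping.
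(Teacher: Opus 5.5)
Your proof is correct and follows essentially the same route as the paper. You show $\eta(b)(A_F)=A_{bF}$, which gives the permutation property and a single transitive orbit with base point $B_j$, so admissibility is literally \eqref{eq:IPO-diagonal}; under flag-transitivity, Theorem~\ref{thm:cosetFT} with $J=\{j\}$ then gives $B_HB_j\cap B_KB_j=(B_H\cap B_K)B_j$. Your bookkeeping is slightly more careful than the paper's: you make the orbit-to-product-set translation explicit, and you correctly write $B_H\cap B_K=B_{H\cup K}$ with $H\cup K=I_\beta\setminus(M\cap N)$, whereas the paper's display indexes this subgroup by $(I_\beta\setminus M)\cap(I_\beta\setminus N)$.
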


\begin{proof}
For $b\in B$ and $F\in X_j$, the automorphism $\eta(b)$ sends $a_E$ to $a_{bE}$. Hence,
we have $\eta(b)(A_F)=A_{bF}$.

Thus the maximal parabolic subgroups are permuted, and the action on their index set is simply the transitive action of $B$ on $X_j$. Since the action is transitive, there is a unique orbit $L$ on $X_j$. By choosing $F_L=B_j$ as our standard base point of this orbit, the admissibility condition reduces to equation~\eqref{eq:IPO-diagonal}.

Suppose now that $B$ is flag-transitive on $\beta$. As $F_L=B_L$, we have that 
$$\mathcal{O}(M)\cap\mathcal{O}(N)= B_{I_\beta\setminus M}\cdot F_L \cap B_{I_\beta\setminus N}\cdot F_L
= (B_{I_\beta\setminus M}\cdot B_j) \cap (B_{I_\beta\setminus M}\cdot B_j),$$
for some $M,N\subseteq I_\beta$.
By the flag-transitivity of $B$ in $\beta$ (see Theorem~\ref{thm:cosetFT}), we have
$$(B_{I_\beta\setminus M}\cdot B_j) \cap (B_{I_\beta\setminus N}\cdot B_j) = (B_{(I_\beta\setminus M)\cap (I_\beta\setminus N)}\cdot B_j) = \mathcal{O}(M\cap N),
$$
concluding the proof.
\end{proof}

\subsection{The $j$-diagonals twisting}

Starting from a regular hypertope $\beta =(B,(B_j)_{j \in I_\beta})$ and choosing a type $j \in I$, we can construct a $j$-diagonal Coxeter graph $\mathcal{D}$ on which $B$ acts. We can now apply the twisting construction of \cite{wreath} to this setting.

\begin{thm}\label{thm:diagonal-twisting}
Let $\beta=(B,(B_i)_{i\in I_\beta})$ be a regular hypertope. Fix $j\in I_\beta$, assume that $X_j$ is finite and has at least two elements, and let $\mathcal{D}$ be a $j$-diagonal Coxeter graph. Consider $A=W(\mathcal{D})$ and let $\alpha$ be its standard coset incidence system. Then:
\begin{enumerate}
    \item $\TT(\alpha,\beta)$ is a regular hypertope, for $F_L=B_j$;
    \item If $M\leq A$ is invariant under $\eta(B)$, let $N=\langle\!\langle M\rangle\!\rangle_A$ and set
    \[
    \widetilde\alpha=
    \bigl(A/N,(A_FN/N)_{F\in X_j}\bigr).
    \]
    If $\widetilde\alpha$ is a regular hypertope, then $\TT(\widetilde\alpha,\beta)$ is a regular hypertope for $F_L=B_j$.
\end{enumerate}
\end{thm}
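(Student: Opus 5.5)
The plan is to reduce both parts to Theorem~\ref{thm:twisting-hypertope}, using Theorem~\ref{thm:admissibility-diagonal} and Lemma~\ref{lem:quotient-admissible} to supply admissibility.

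For (1), I first check that $\alpha$ is a regular hypertope. By construction $\mathcal{D}$ is a Coxeter graph with finitely many vertices, since $X_j$ is finite, and with at least two vertices. Its standard coset geometry $\alpha=(W(\mathcal{D}),(A_F)_{F\in X_j})$ is therefore a regular hypertope, as recalled in Section~\ref{sec:prelims:Coxeter}. Next, $\beta$ is a regular hypertope, so $B$ is flag-transitive on $\beta$. The last assertion of Theorem~\ref{thm:admissibility-diagonal} then says that $\alpha$ is $(\beta,\eta)$-admissible, where $\eta=\varphi\circ\gamma$ is the action of~\eqref{eq:eta} and $F_L=B_j$ is the base point of the unique orbit. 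With both inputs in place, Theorem~\ref{thm:twisting-hypertope} gives that $\TT(\alpha,\beta)$ is a regular hypertope.

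There is one small gap to handle here. The flag-transitivity criterion of Theorem~\ref{thm:cosetFT} is stated for coset incidence systems, so I must use the given presentation $\beta=(B,(B_i))$, with $B$ acting by left multiplication as a flag-transitive group. For a regular hypertope in this coset form, that is exactly what regularity means, so the hypothesis of Theorem~\ref{thm:admissibility-diagonal} is met.

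For (2), let $M\le A$ be invariant under $\eta(B)$, meaning $M$ is closed under $\eta$. By Lemma~\ref{lem:normalclosureClosedUnderGraphAut}, the normal closure $N$ is also closed under $\eta$. Hence each $\eta(b)$ induces an automorphism of $A/N$, and $B$ acts on $\widetilde\alpha$. This action still sends $A_FN/N$ to $A_{bF}N/N$, and it uses the same type set $X_j$ with the same orbit and base point. Lemma~\ref{lem:quotient-admissible}, applied with the admissibility of $\alpha$ established in part (1), gives that $\widetilde\alpha$ is $(\beta,\eta)$-admissible. Since $\widetilde\alpha$ is assumed to be a regular hypertope, Theorem~\ref{thm:twisting-hypertope} applies again and yields that $\TT(\widetilde\alpha,\beta)$ is a regular hypertope.

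The main point needing care is that the induced action on $A/N$ is well defined. It is, because $N$ is $\eta$-invariant, and that invariance is precisely what Lemma~\ref{lem:normalclosureClosedUnderGraphAut} supplies. The remaining question is whether the maximal parabolic subgroups $A_FN/N$ might coincide for distinct $F$, which would alter the type set. This does not arise, because $\widetilde\alpha$ is assumed to be a regular hypertope, and Lemma~\ref{lem:quotient-admissible} is formulated with the same index set $X_j$.
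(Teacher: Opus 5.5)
Your proposal is correct and follows the paper's proof essentially step for step. Part (1) combines the regularity of the standard Coxeter coset geometry with the flag-transitivity clause of Theorem~\ref{thm:admissibility-diagonal} and then Theorem~\ref{thm:twisting-hypertope}; part (2) passes $\eta$-invariance to $N$ via Lemma~\ref{lem:normalclosureClosedUnderGraphAut}, obtains admissibility of $\widetilde\alpha$ from Lemma~\ref{lem:quotient-admissible}, and applies Theorem~\ref{thm:twisting-hypertope} again, with your extra remarks on the induced action on $A/N$ being harmless details the paper leaves implicit.
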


\begin{proof}
The standard coset incidence system of a Coxeter group is a regular hypertope. By Theorem~\ref{thm:admissibility-diagonal} we know that $\alpha$ is $(\beta,\eta)$-admissible since $\beta$ is a regular hypertope. Part~(a) thus follows from Theorem~\ref{thm:twisting-hypertope}.

For part~(b), the invariance of $M$ implies that $N$ is invariant under $\eta(B)$. Lemma~\ref{lem:quotient-admissible} shows that $\widetilde\alpha$ is admissible. The result again follows from Theorem~\ref{thm:twisting-hypertope}.
\end{proof}

The result above mimics a construction of a generalised cube from a polytope $\beta$, denoted as $2^\beta$, that was introduced by Danzer~\cite{danzer_regular_1984} and revised in~\cite[Theorem 8C2]{ARP}. Indeed, Danzer's construction from a regular polytope will coincide with our construction when the edge labels of $\mathcal{D}$ are all equal to $2$.
Additionally, in \cite{ARP}, McMullen and Schulte define a polytope construction $2^{\beta,\mathcal{D}(s)}$ for centrally symmetric regular polytopes $\beta$, where $\mathcal{D}(s)$ is a Coxeter graph with labels $2$ for all $0$-diagonals, except those connecting antipodal vertices of $\beta$, which are connected by an edge labelled $s$. Indeed, $2^\beta = 2^{\beta,\mathcal{D}(2)}$. In general, one can define the polytope $2^{\beta,\mathcal{D}}$, for any diagram $\mathcal{D},$ as given in~\cite[Section~8C]{ARP}, which will match construction given in Theorem~\ref{thm:diagonal-twisting} if we fix $F_L=B_0$.

\begin{coro}\label{coro:twistingGiving2PG}
Let $\beta$ be an abstract regular $n$-polytope with automorphism group
$B=\langle\rho_0,\ldots,\rho_{n-1}\rangle$.
Let $\mathcal{D}$ be the Coxeter graph on the $0$-elements of $\beta$ obtained from Construction~\ref{const:graphFromCosetIncidence}.
Choose the base vertex $F_L=B_0$. Then $2^{\beta,\mathcal{D}}\cong\TT(\alpha,\beta)$,
where $\alpha$ is the standard coset incidence system of $W(\mathcal{D})$.
\end{coro}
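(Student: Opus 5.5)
We compare the group-theoretic data of both constructions and show they give the same coset geometry. Recall from~\cite[Section~8C]{ARP} that $2^{\beta,\mathcal{D}}$ is a regular $(n+1)$-polytope. Its automorphism group is the semidirect product $W(\mathcal{D})\rtimes B$, where $B$ acts on $W(\mathcal{D})$ by permuting the generators $a_F$ according to its action on the vertex set $X_0$. Its distinguished generators are $a_{F_0}$, with $F_0$ the base vertex, together with $\rho_0,\ldots,\rho_{n-1}$ (suitably reindexed so that $a_{F_0}$ plays the role of the first generator).

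Step~1 is to check that this action coincides with $\eta=\varphi\circ\gamma$ from~\eqref{eq:eta}. Both send $a_F$ to $a_{bF}$. Hence the group $G$ of $\TT(\alpha,\beta)$, which is $A\rtimes_\eta B$, is literally the automorphism group of $2^{\beta,\mathcal{D}}$.

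Step~2 is to compute the maximal parabolic subgroups of $\TT(\alpha,\beta)$ from Definition~\ref{def:gen_twist} with the single orbit $L=X_0$ and $F_L=B_0$. There are two cases.
\begin{itemize}
\item For the type $L\in\KK$, we get $G_L=A_{B_0}\rtimes B=\langle a_E\mid E\neq B_0\rangle\rtimes B$.
\item For $i\in I_\beta$, we get $G_i=A_{X_0\setminus \mathcal{O}^i}\rtimes B_i$, where $\mathcal{O}^i=B_i\cdot B_0$. Here $A_{X_0\setminus\mathcal{O}^i}$ is the standard parabolic subgroup generated by the $a_E$ with $E\in\mathcal{O}^i$.
\end{itemize}
In each case we then show that this subgroup equals the stabiliser of the corresponding base face of $2^{\beta,\mathcal{D}}$, that is, the subgroup generated by all distinguished generators except one.

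The subgroup generated by all generators except $a_{B_0}$ is $\langle \rho_0,\ldots,\rho_{n-1}\rangle=B$. This does not match $G_L$ directly, so the type correspondence must be chosen with care. We use the description in~\cite[Theorem~8C2]{ARP} of the faces of $2^{\beta,\mathcal{D}}$: the faces correspond to pairs consisting of a face $F$ of $\beta$ (or $\emptyset$) and a coset of the parabolic subgroup generated by the $a_E$ with $E$ a vertex of $F$. Under this description, the base $i$-face for $i\in I_\beta$ has stabiliser $\langle a_E\mid E\in \mathcal{O}^i\rangle\rtimes B_i$. This matches $G_i$ exactly when $\mathcal{O}^i=B_i\cdot B_0$ is the vertex set of the base $i$-face. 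That identification holds because, in a regular polytope, the vertices of the base $i$-face are the $0$-faces incident to it, and by flag-transitivity these form the $B_i$-orbit of the base vertex. The extra type $L$ corresponds to the vertices of $2^{\beta,\mathcal{D}}$, which are the cosets of $A_{B_0}\rtimes B$ (equivalently, $W(\mathcal{D})/A_{B_0}$).

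Step~3 is to conclude. Both $2^{\beta,\mathcal{D}}$ and $\TT(\alpha,\beta)$ are regular hypertopes: the first is a regular polytope, and the second is a regular hypertope by Theorem~\ref{thm:diagonal-twisting}. A flag-transitive geometry is isomorphic to the coset geometry of its automorphism group with respect to its base-flag stabilisers. Since the groups agree (Step~1) and the families of parabolic subgroups agree up to a bijection of types (Step~2), the two geometries are isomorphic.

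The main obstacle is Step~2, specifically reconciling McMullen--Schulte's face description and indexing in~\cite[Section~8C]{ARP} with Definition~\ref{def:gen_twist}. In particular, one must verify carefully that $A_{X_0\setminus\mathcal{O}^i}$ is generated exactly by the $a_E$ with $E$ in the vertex set of the base $i$-face, which relies on $B_i\cdot B_0$ being that vertex set.
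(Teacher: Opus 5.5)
Your plan (identify the group and the action, match the maximal parabolic subgroups type by type, then use the coset representation of a regular polytope) is the same as the paper's, whose proof only sketches exactly this. However, Step~2 goes wrong for the extra type $L$, and your way of resolving the resulting mismatch rests on a false claim.

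In Definition~\ref{def:gen_twist} the index $i\in\mathcal{K}$ is an orbit $L\subseteq I_\alpha$. So $A_L$ must be read with the usual convention $A_L=\bigcap_{F\in L}A_F$, not as $A_{F_L}$. Your reading cannot be right:
\begin{itemize}
\item Since $\eta(b)(A_{B_0})=A_{bB_0}$, the subgroup $A_{B_0}$ is not $\eta(B)$-stable, so $A_{B_0}\rtimes B$ is not a subgroup.
\item The subgroup generated by $A_{B_0}$ and $B$ is all of $A\rtimes_\eta B$, because $B$ is transitive on $X_0$ and $|X_0|\ge 2$.
\end{itemize}
With the correct reading, $A_{X_0}=\langle a_E\mid E\in\emptyset\rangle=1$, so $G_L=B=\langle\rho_0,\ldots,\rho_{n-1}\rangle$.

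Your assertion that the vertices of $2^{\beta,\mathcal{D}}$ are the cosets of $A_{B_0}\rtimes B$, ``equivalently $W(\mathcal{D})/A_{B_0}$'', is false. They are the cosets of $B$, which are in bijection with $W(\mathcal{D})$ itself. For instance, if all labels are $2$, then $W(\mathcal{D})\cong\mathbb{Z}_2^{|X_0|}$ and $2^\beta$ has $2^{|X_0|}$ vertices, while $W(\mathcal{D})/A_{B_0}$ has two elements. So Step~3's claim that the two families of parabolic subgroups agree up to a bijection of types is not justified by what you wrote; with your $G_L$ they do not agree.

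The repair is simple and removes the need for any delicate type matching. $G_L=B$ is exactly the stabiliser of the base vertex of $2^{\beta,\mathcal{D}}$, i.e.\ the subgroup generated by all distinguished generators except $a_{B_0}$. So $L$ corresponds to rank $0$, and $i\in I_\beta$ corresponds to rank $i+1$.

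Your treatment of $i\in I_\beta$ is fine, and can be done purely group-theoretically without the McMullen--Schulte face description. The $B_i$-conjugates of $a_{B_0}$ are exactly the $a_E$ with $E\in B_i\cdot B_0$, hence
$\langle a_E\mid E\in B_i\cdot B_0\rangle\rtimes B_i=\langle a_{B_0},\rho_k\mid k\neq i\rangle$.
This is the subgroup generated by all distinguished generators except $\rho_i$. With this correction your argument coincides with the paper's: same group, same distinguished generators, same maximal parabolic subgroups.
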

\begin{proof}
The action of $B$ on $\mathcal{D}$ is the action used in the classical construction. The choice $F_L=B_0$ ensures that the stabilizer $B_0$ fixes the base vertex. The two constructions therefore give the same distinguished generators and the same maximal parabolic subgroups. Hence the associated regular polytopes are isomorphic.
\end{proof}

We now describe the diagram of the hypertope $\TT(\alpha,\beta)$ obtained in Theorem~\ref{thm:diagonal-twisting}. It is an extension of the diagram of $\beta$ by an even edge, attached to the $j$-node of the diagram of $\beta$. 

\begin{coro}\label{coro:diagramtwistingGraph}Suppose we are in the settings of the hypothesis of Theorem~\ref{thm:diagonal-twisting}.
Let $I_\beta=\{1,\ldots,n\}$ and set $B=\langle b_i\mid i\in I_\beta\rangle$. Assume also that the Coxeter graph of $B$ is the following:
   \begin{center}
    \begin{tikzpicture}[scale = 0.5]

   \filldraw[black] (2,0) circle (2pt)  node[anchor=north]{$j$};
   \filldraw[black] (7+2,0.8) circle (0pt)  node[anchor=north]{$\vdots$};
    \filldraw[black] (7+2,2) circle (2pt)  node[anchor=south west]{$2$};
         \filldraw[black] (7+2,-2) circle (2pt)  node[anchor=south west]{${n-1}$};
         \filldraw[black] (7+2,4) circle (2pt)  node[anchor=south west]{$1$};
         \filldraw[black] (7+2,-4) circle (2pt)  node[anchor=south west]{${n}$};
    \draw (2,0) -- (7+2,2)node [midway, below] (TextNode) {$m_{j,2}$};
    \draw (2,0) -- (7+2,4)node [midway,above] (TextNode) {$m_{j,1}$};
    \draw (2,0) -- (7+2,-2)node[midway,above] (TextNode) {$m_{j,n-1}$};
    \draw (2,0) -- (7+2,-4)node[midway,below] (TextNode) {$m_{j,n}$};
    \draw (7.3+2.2,0) ellipse (2.6cm and 5cm);

    \end{tikzpicture}
    \end{center}
Let $A=W(\mathcal{D})$ be the Coxeter group associated to the $j$-diagonal graph $\mathcal{D}$, generated by $\{ a_F\mid F\in V(\mathcal{D})\}$, and set $F_L = B_j$. Then the Coxeter diagram of the hypertope $\TT(\alpha,\beta)$ is
   \begin{center}
    \begin{tikzpicture}[scale = 0.5]
    
   \filldraw[black] (-3,0) circle (2pt)  node[anchor=north]{$ 0 $};
   \filldraw[black] (2,0) circle (2pt)  node[anchor=north]{$j$};
   \filldraw[black] (7+2,0.8) circle (0pt)  node[anchor=north]{$\vdots$};
    \filldraw[black] (7+2,2) circle (2pt)  node[anchor=south west]{$2$};
         \filldraw[black] (7+2,-2) circle (2pt)  node[anchor=south west]{${n-1}$};
         \filldraw[black] (7+2,4) circle (2pt)  node[anchor=south west]{$1$};
         \filldraw[black] (7+2,-4) circle (2pt)  node[anchor=south west]{${n}$};
    \draw (2,0) -- (-3,0)node [midway,above] (TextNode) {$2m$};
    \draw (2,0) -- (7+2,2)node [midway, below] (TextNode) {$m_{j,2}$};
    \draw (2,0) -- (7+2,4)node [midway,above] (TextNode) {$m_{j,1}$};
    \draw (2,0) -- (7+2,-2)node[midway,above] (TextNode) {$m_{j,n-1}$};
    \draw (2,0) -- (7+2,-4)node[midway,below] (TextNode) {$m_{j,n}$};
    \draw (7.3+2.2,0) ellipse (2.6cm and 5cm);

    \end{tikzpicture}
    \end{center}
where $m = \lambda([B_j, b_jB_j])$.
\end{coro}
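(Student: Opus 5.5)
The plan is to identify the distinguished involutions of the regular hypertope $\TT(\alpha,\beta)=(G,(G_i)_{i\in I})$, where $G=A\rtimes_\eta B$. Then we compute the orders of their pairwise products directly in the semidirect product. The type set is $I=I_\beta\cup\{0\}$, where $0$ stands for the unique orbit $L=X_j$ (Theorem~\ref{thm:admissibility-diagonal}). Take as candidate generators $b_i$ for $i\in I_\beta$, and $a:=a_{B_j}$ for the type $0$. By Theorem~\ref{thm:diagonal-twisting} the twisting is a regular hypertope, so its automorphism group is generated by involutions $\rho_k$. Each $\rho_k$ lies in every maximal parabolic subgroup except $G_k$. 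So the first step is to check that our candidates have exactly this property.

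\textbf{Checking the candidates.} By Definition~\ref{def:gen_twist}, $G_0=A_{B_j}\rtimes B$. It contains every $b_i$ and does not contain $a_{B_j}$. For $i\in I_\beta$, $G_i=A_{X_j\setminus\mathcal O^i}\rtimes B_i$ with $\mathcal O^i=B_i\cdot B_j\ni B_j$. The subgroup $A_{X_j\setminus\mathcal O^i}$ is generated by the $a_F$ with $F\in\mathcal O^i$, so it contains $a_{B_j}$. Also $b_k\in B_i$ for every $k\neq i$, while $b_i\notin B_i$. Since $\beta$ is a regular hypertope, $B_i=\langle b_k\mid k\ne i\rangle$. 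Hence $\{a\}\cup\{b_i\}$ is the family of distinguished generators, and it generates $G$: $A$ is generated by the $B$-conjugates $b\,a\,b^{-1}=a_{bB_j}$, since $B$ is transitive on $X_j$.

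\textbf{Computing the edge labels.} Edges among the $b_i$ are unchanged, because $B$ embeds in $G$. For $i\neq j$ we have $b_i\in B_j$, so $\eta(b_i)(a)=a_{b_iB_j}=a$, and $b_i$ commutes with $a$. This gives label $2$, i.e.\ no edge. For $i=j$, set $a'=b_jab_j=a_{b_jB_j}$. Here $b_j\notin B_j$, because $B_j=\langle b_k\mid k\neq j\rangle$ is a proper parabolic subgroup in a hypertope, so $a'\neq a$ is a distinct generator of $W(\mathcal D)$. Then $(ab_j)^2=aa'$ and $(ab_j)^{2k}=(aa')^k$. Odd powers of $ab_j$ project onto $b_j\neq 1$ in $B$, so they are never trivial. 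In the Coxeter group $W(\mathcal D)$ the product $aa'$ has order exactly $\lambda([B_j,b_jB_j])=m$, a standard fact for Coxeter groups. Hence $ab_j$ has order $2m$, with the conventions $2\cdot 2=4$ when $m=2$ and $2m=\infty$ when $m=\infty$.

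The main point needing care is the first step: confirming that the generators singled out by the twisting's parabolic subgroups are exactly $a_{B_j}$ and the $b_i$. This uses the choice $F_L=B_j$ and $B_i=\langle b_k\mid k\ne i\rangle$ from regularity. The order computation is then routine.
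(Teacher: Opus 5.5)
Your proposal is correct and follows essentially the same route as the paper. You generate $A\rtimes_\eta B$ by $a_{B_j}$ and the $b_i$, use $b_i\in B_j$ for $i\neq j$ to get that these commute, and use $b_ja_{B_j}b_j=a_{b_jB_j}$ to reduce $(a_{B_j}b_j)^{2k}$ to $(a_{B_j}a_{b_jB_j})^k$. You are also more careful than the paper on two points it leaves implicit: you check via the parabolic subgroups $G_i$ that these elements really are the distinguished generators of $\TT(\alpha,\beta)$, and you show that the order of $a_{B_j}b_j$ is exactly $2m$ rather than merely dividing $2m$ (odd powers project to $b_j\neq 1$, and $a_{B_j}a_{b_jB_j}$ has order exactly $m$ in $W(\mathcal{D})$).
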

\begin{proof}
    First, note that the automorphism group of the regular hypertope $\TT(\alpha,\beta)$ is $A\rtimes_\eta B$, that is 
    $$\langle a_F \mid F\in V(\mathcal{D}) \rangle \rtimes_\eta \langle b_i\mid i\in \{1,\ldots,n\} \rangle.$$
    Indeed, given the group presentations for $A = \langle a_k, k\in V(\mathcal{D})\mid R_A\rangle$ and for $B = \langle b_i, i\in I_\beta\mid R_B\rangle$, we have $$A\rtimes_\eta B = \langle a_k,b_i (k\in V(\mathcal{D}),i\in I_\beta)\mid R_A, R_B, \eta(b)(a) = bab^{-1}, \forall a\in A\ \forall b\in B\rangle$$
    As the action of $B$ on $V(\mathcal{D})$ is transitive, we can write all generators of $A$ in terms of the generator $a_{B_j}$.  This means that for every $F\in V(\mathcal{D})$, there exists $b\in B$ such that $\eta(b)(a_{B_j})=a_F$, which can be rewritten as $ba_{B_j}b^{-1}=a_F$, since $\eta$ acts by conjugation, as shown by the group presentation.
    Therefore, the group $A\rtimes_\eta B = \langle a_{B_j}, b_i\mid i\in I_\beta\rangle$, since the other generators of $A$ can be obtained by letting $\eta$ act on $a_{B_j}$.
    We now need to determine the Coxeter relations between the generators $\{a_{B_j},b_i\mid i\in I_\beta\}$. The relations between $b_i$ and $b_j$, for $i,j \in I_\beta$, are the same as in $B$. We only need to check the relation between $a_{B_j}$ and $b_i$, for $i\in I_\beta$.
    For every $i\neq j$, note that $\eta(b_i)(a_{B_j}) = a_{b_iB_j} = a_{B_j}$, since $b_i\in B_j$. Hence, since $a_{B_j} = \eta(b_i)(a_{B_j}) = b_ia_{B_j}b_i$, we conclude that $(b_ia_{B_j})^2 = e$. In the Coxeter graph of $A\rtimes_\eta B$, this implies that the vertices $a_{B_j}$ and $b_i$ are disconnected for $i\neq j$.
    For $i=j$, we have that $\eta(b_j)(a_{B_j}) = a_{b_jB_j}=b_ja_{B_j}b_j$. From the $j$-diagonal graph $\mathcal{D}$, we know that $m=\lambda([B_j,b_jB_j])$. Therefore, we have $(a_{B_j}a_{b_jB_j})^m=e$. By substituting $a_{b_jB_j}$ with $b_ja_{B_j}b_j$, we get $e = (a_{B_j}b_ja_{B_j}b_j)^m = (a_{B_j}b_j)^{2m}$. This means that the edge connecting the vertex $a_{B_{j}}$ and the vertex $b_j$ is labelled $2m$.
    Relabelling the generators $g_0 := a_{B_j}$ and $g_i:=b_i$, for $i\in I_\beta$, we get the claimed Coxeter diagram for $A\rtimes_\eta B$.

\end{proof}

From Corollary~\ref{coro:diagramtwistingGraph}, it is very easy to see that we can always extend a regular hypertope with an even edge. Indeed, if all labels of the graph $\mathcal{D}$ are $2$, then the construction above will give a finite regular hypertope with Coxeter diagram having a new edge labelled $4$, provided the starting hypertope $\beta$ was finite. In general, we can prove the theorem below.

\begin{thm}
     Let $\mathcal{D}$ be a Coxeter graph, and suppose that it is a tree all of whose labels are equal to $4$ except possibly one label which can have any integer $k\geq 3$. Then, there exists a finite hypertope with Coxeter diagram $\mathcal{D}$.
\end{thm}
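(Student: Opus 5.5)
We argue by induction on the number of vertices of the tree $\mathcal{D}$, growing the tree one leaf at a time with Corollary~\ref{coro:diagramtwistingGraph}, and we arrange to attach the exceptional edge first.

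For the base case, suppose first that $\mathcal{D}$ has an edge $e=\{u,v\}$ with label $k\geq 3$ (if all labels equal $4$, pick any edge, so $k=4$). The two-vertex diagram $u\overset{k}{-}v$ is realised by the regular $k$-gon. This is a finite regular hypertope with group the dihedral group of order $2k$. A one-vertex tree is realised by the rank-one hypertope with group $C_2$.

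For the inductive step, since $\mathcal{D}$ is a tree, its vertices can be ordered $v_1,v_2,\dots,v_r$ starting from $u,v$ so that each $v_t$ with $t\geq 3$ is adjacent to exactly one earlier vertex $v_{p(t)}$. The induced subgraph on $\{v_1,\dots,v_t\}$ is then a subtree $\mathcal{D}_t$, and the edge $\{v_{p(t)},v_t\}$ is labelled $4$, because the only possibly different label is on $e$. Assume $\beta_{t-1}$ is a finite regular hypertope with Coxeter diagram $\mathcal{D}_{t-1}$, generated by $b_i$ for $i\in V(\mathcal{D}_{t-1})$. Take $j=v_{p(t)}$. Then $X_j$ is finite because $B$ is finite, and $|X_j|\geq 2$ because $b_j\notin B_j$; this holds since in a regular hypertope the distinguished generator $b_j$ does not lie in the parabolic subgroup generated by the other generators. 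Choose the label function $\lambda\equiv 2$ on $\CC^j_{\beta_{t-1}}$. The $j$-diagonal Coxeter graph then has no edges, so $A=W(\mathcal{D})\cong C_2^{|X_j|}$ is finite and $\alpha$ is a finite regular hypertope. By Theorem~\ref{thm:diagonal-twisting}(1), $\TT(\alpha,\beta_{t-1})$ is a regular hypertope, and it is finite by Theorem~\ref{thm:twisting-hypertope}. By Corollary~\ref{coro:diagramtwistingGraph} with $m=2$, its Coxeter diagram is $\mathcal{D}_{t-1}$ together with a new vertex joined only to $j$ by an edge labelled $2m=4$, which is exactly $\mathcal{D}_t$. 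After $r$ steps we obtain the required hypertope for $\mathcal{D}$.

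The main point to check is that Corollary~\ref{coro:diagramtwistingGraph} applies when the $j$-node is not adjacent to every other node; nodes not adjacent to $j$ simply carry $m_{j,i}=2$ in that picture. Its proof only uses the fact that $b_i\in B_j$ for $i\neq j$, which gives commutation of the new generator with $b_i$, and the order computation $(a_{B_j}b_j)^{2m}=e$. We should also confirm that the order of $a_{B_j}b_j$ is exactly $2m=4$ rather than a proper divisor. Since $b_jB_j\neq B_j$, the elements $a_{B_j}$ and $a_{b_jB_j}=b_ja_{B_j}b_j$ are distinct commuting involutions in $A=C_2^{|X_j|}$, so their product $(a_{B_j}b_j)^2$ has order exactly $2$, and $a_{B_j}b_j$ has order exactly $4$. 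The relations among the $b_i$ are inherited unchanged, so the diagram of $\beta_{t-1}$ is preserved inside the new one.
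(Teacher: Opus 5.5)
Your proof is correct and follows the paper's argument: induction on the number of vertices, where each step adds a leaf at $j$ via the $j$-diagonals twisting with $\lambda\equiv 2$, so that $A$ is elementary abelian and Corollary~\ref{coro:diagramtwistingGraph} gives a new edge labelled $4$. You also make explicit three details the paper leaves implicit: the vertex ordering starting from the exceptional edge (so every added leaf edge carries label $4$), the check that $|X_j|\geq 2$, and the verification that $a_{B_j}b_j$ has order exactly $4$.
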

\begin{proof} 
    We will prove this inductively on the rank $n$ of the hypertope, or equivalently on the number of vertices of the tree.
    The statement holds for $n=2$. Indeed, for $k\in\{3,\ldots\}$, a $k$-polygon is a hypertope for a tree with two nodes and label $k$. It will be finite precisely when $k < \infty$. 
    
    Suppose now that the statement holds for $n-1$. We can thus find a finite $(n-1)$-hypertope $\beta$ whose Coxeter graph a tree with $n-1$ vertices whose labels satisfy the conditions of the theorem.
    Choose a type $j$ and build the $j$-diagonal Coxeter graph $\mathcal{D}$ where $\lambda(D) = 2 $ for any diagonal $D\in \CC^{j}_\beta$. In other words, this is the Coxeter graph with $[B:B_j]$ vertices and no edges. The Coxeter group $A=W(\mathcal{D})$ is the finite abelian Coxeter group of order $2^{[B:B_j]}$. By Theorem~\ref{thm:diagonal-twisting} and Corollary~\ref{coro:diagramtwistingGraph}, we get that $\TT(\alpha,\beta)$ is a finite hypertope, of order $2^{[B:B_j]}\cdot |B|$, whose Coxeter diagram is obtained from the one of $B$, by adding new vertex connected only to the vertex $j$ by edge labelled $4$. We thus have constructed a suitable hypertope of rank $n$, concluding the induction step of the proof.

\end{proof}

\subsection{Extending a Coxeter graph}\label{subsec:extension}

We now combine a given Coxeter graph with a $j$-diagonal Coxeter graph. For a coset geometry $\beta=(B,(B_i)_{i\in I_\beta})$, this will allow the natural action of $B$ on its $j$-elements to extend to an action of $B$ on a bigger Coxeter graph.
This is the graph operation needed for the general twisting construction in~\cite[Section~8B]{ARP}, that we generalize from polytopes to the context of coset geometries.

\begin{construction}[Extension of Coxeter graphs]\label{const:graphJoiningTwoGraphs}
Let $\GG$ and $\mathcal{D}$ be Coxeter graphs with distinguished vertices $v_\GG\in V(\GG)$ and $v_\mathcal{D}\in V(\mathcal{D})$. Assume that their vertex sets are otherwise disjoint. Identify $v_\GG$ with $v_\mathcal{D}$, and denote the identified vertex by $v$. The graph
\[
\KK=\EXT(\GG,\mathcal{D})
\]
is defined as follows:
\begin{itemize}
    \item $V(\KK)$ is obtained from $V(\GG)\sqcup V(\mathcal{D})$ by identifying $v_\GG$ and $v_\mathcal{D}$;
    \item all edges of $\GG$ and $\mathcal{D}$ remain, with the same labels;
    \item if $w\in V(\GG)\setminus\{v\}$ is joined to $v$ in $\GG$ with label $m$, then $w$ is joined to every $w'\in V(\mathcal{D})\setminus\{v\}$ with the same label $m$.
\end{itemize}
No other edges are added.
\end{construction}

One important thing to notice is that $\EXT(\GG,\mathcal{D})$ is not the same as $\EXT(\mathcal{D},\GG)$.

\begin{example}\label{ex:extension}
Let the vertices of $\GG$ be $u,v_\GG,w$, from left to right, and let the vertices of $\mathcal{D}$ be $h_1,v_\mathcal{D},h_2$, from left to right:
$$
\GG:\quad
\xymatrix@C=18mm{
*{\bullet}\ar@{-}[r]^3 &*{\bullet}\ar@{-}[r]^4 & *{\square}\ar@{-}[r]^\infty & *{\bullet}}
\qquad
\mathcal{D}:\quad
\xymatrix@C=18mm{
*{\bullet}\ar@{-}[r]^5 & *{\square}\ar@{-}[r]^3 & *{\bullet}}
$$
After identifying the two vertices $v_{\GG}$ and $v_{\mathcal{D}}$, marked as squares in the diagrams, the extension $\EXT(\GG,\mathcal{D})$ is
$$
\xymatrix@C=18mm@R=12mm{
& & *{\bullet}\ar@{-}[d]^5\ar@{-}[dl]_4\ar@{-}[dr]^\infty & \\
*{\bullet}\ar@{-}[r]^3 &*{\bullet}\ar@{-}[r]^4 & *{\square}\ar@{-}[r]^\infty\ar@{-}[d]^3 & *{\bullet}\\
& & *{\bullet}\ar@{-}[ul]^4\ar@{-}[ur]_\infty &
}$$
Thus the neighbours of $v_\GG$ in $\GG$ are connected to every vertex of $\mathcal{D}$ with the same labels as before.
On the other hand, the extension $\EXT(\mathcal{D},\GG)$ gives another Coxeter graph.
$$
\xymatrix@C=18mm@R=12mm{
& *{\bullet}\ar@{-}[d]^3\ar@{-}[ddl]_5\ar@{-}[ddr]^3 \\
& *{\bullet}\ar@{-}[d]^4\ar@{-}[dl]_5\ar@{-}[dr]^3 & \\
*{\bullet}\ar@{-}[r]^5 & *{\square}\ar@{-}[r]^3\ar@{-}[d]^\infty & *{\bullet}\\
& *{\bullet}\ar@{-}[ul]^5\ar@{-}[ur]_3 &
}$$

\end{example}

If $\GG$ has only one vertex, then $\EXT(\GG,\mathcal{D})=\mathcal{D}$. Similarly, if $\mathcal{D}$ has one vertex, then $\EXT(\GG,\mathcal{D})=\GG$.

Now let $\mathcal{D}$ be a $j$-diagonal Coxeter graph of $\beta$, and choose $v_\mathcal{D}=B_j$. Let $\KK=\EXT(\GG,\mathcal{D})$. The action of $B$ on $\mathcal{D}$ extends to $\KK$ by fixing each vertex of $V(\GG)\setminus\{v\}$.

\begin{lemma}\label{lem:gammaHomoExtendedGraph}
The map
\[
\gamma:B\longrightarrow\Aut(\KK)
\]
defined by
\[
\gamma(b)(g)=g\quad(g\in V(\GG)\setminus\{v\}),
\qquad
\gamma(b)(h)=bh\quad(h\in V(\mathcal{D}))
\]
is a well-defined action by label-preserving graph automorphisms.
\end{lemma}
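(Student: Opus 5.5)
The plan is to reduce everything to Lemma~\ref{lem:gammaHomoGraph}, splitting the vertices and edges of $\KK=\EXT(\GG,\mathcal{D})$ according to Construction~\ref{const:graphJoiningTwoGraphs}.

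\emph{Well-definedness and bijectivity.} The vertex set is $V(\KK)=(V(\GG)\setminus\{v\})\sqcup V(\mathcal{D})$, where $v=v_\mathcal{D}=B_j$ is the identified vertex. The formula therefore defines $\gamma(b)$ on every vertex exactly once, with the vertex $v$ treated as an element of $V(\mathcal{D})$. Note that $\gamma(b)(v)=bB_j$, which in general differs from $v$. This is allowed because $v$ is a vertex of $\mathcal{D}$ and is not among the vertices fixed by the first clause. Each $\gamma(b)$ is a bijection with inverse $\gamma(b^{-1})$: it is the identity on $V(\GG)\setminus\{v\}$ and a permutation of $X_j=V(\mathcal{D})$. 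The homomorphism property $\gamma(b_1)\gamma(b_2)=\gamma(b_1b_2)$ holds on $V(\mathcal{D})$ by the computation in Lemma~\ref{lem:gammaHomoGraph}, and it holds trivially on $V(\GG)\setminus\{v\}$.

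\emph{Preservation of edges and labels.} It suffices to show that $\gamma(b)$ maps each edge to an edge with the same label, since applying the same argument to $\gamma(b^{-1})$ shows that non-edges go to non-edges. The edges of $\KK$ fall into three types.
\begin{enumerate}
\item Edges of $\mathcal{D}$, including those at $v$. These are handled by Lemma~\ref{lem:gammaHomoGraph}, because the label of $\{F_1,F_2\}$ depends only on the class $[F_1,F_2]$.
\item Edges of $\GG$ with both ends in $V(\GG)\setminus\{v\}$. These are fixed pointwise by $\gamma(b)$.
\item Edges joining $w\in V(\GG)\setminus\{v\}$ to some $h\in V(\mathcal{D})$. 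These are the edges of $\GG$ at $v$ together with the added edges. By the construction, $w$ is joined to $v$ in $\GG$ with label $m$ if and only if $w$ is joined to every vertex of $\mathcal{D}$ with label $m$. So adjacency and label between $w$ and $h$ do not depend on $h\in V(\mathcal{D})$. Since $\gamma(b)$ sends $\{w,h\}$ to $\{w,bh\}$ with $bh\in V(\mathcal{D})$, the edge and its label are preserved.
\end{enumerate}

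\emph{Main obstacle.} The only delicate point is case~(3), and specifically the identified vertex $v$. The action moves $v$ to other vertices of $\mathcal{D}$, so one must check that the edges of $\GG$ incident to $v$ are reproduced at every vertex of $\mathcal{D}$. This is exactly the uniformity built into the third clause of Construction~\ref{const:graphJoiningTwoGraphs}, which makes the edges from $w$ into $V(\mathcal{D})$ invariant under any permutation of $V(\mathcal{D})$. Finally, no edges other than the three types above exist, since the construction adds no others. This completes the verification.
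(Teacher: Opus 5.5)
Your proof is correct and follows the same route as the paper. Edges inside $\mathcal{D}$ are handled by Lemma~\ref{lem:gammaHomoGraph}, vertices of $V(\GG)\setminus\{v\}$ are fixed, and edges between $V(\GG)\setminus\{v\}$ and $V(\mathcal{D})$ are preserved because Construction~\ref{const:graphJoiningTwoGraphs} makes them uniform over $V(\mathcal{D})$; your explicit treatment of bijectivity and of non-edges (via $\gamma(b^{-1})$) fills in details the paper leaves implicit.
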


\begin{proof}
Edges contained in $\mathcal{D}$ are preserved by Lemma~\ref{lem:gammaHomoGraph}. Vertices in $V(\GG)\setminus\{v\}$ are fixed. If $g$ is joined to one vertex of $\mathcal{D}$ by an edge copied from the edge $\{g,v\}$ of $\GG$, then Construction~\ref{const:graphJoiningTwoGraphs} joins $g$ to every vertex of $\mathcal{D}$ with the same label. Hence the image of this edge is again an edge with the same label. The homomorphism property follows from the action of $B$ on $V(\mathcal{D})$.
\end{proof}

Let $A=W(\KK)$ and let $\alpha=(A,(A_i)_{i\in V(\KK)})$ be its standard coset incidence system. As before, the action on the graph induces an action
\[
\eta:B\longrightarrow\Aut(A).
\]
The action on $V(\KK)$ has one orbit $V(\mathcal{D})$ and fixes every vertex in $V(\GG)\setminus\{v\}$. The orbit intersection condition is automatic for the singleton orbits. Thus only the orbit $V(\mathcal{D})$ must be checked, and this will follow again directly from the fact that we will take $\beta$ to be a regular hypertope (see Theorem~\ref{thm:admissibility-diagonal}).

\begin{thm}\label{thm:extended-twisting}
Let $\beta$ be a regular hypertope, let $\mathcal{D}$ be a $j$-diagonal Coxeter graph, and let $\GG$ be a finite Coxeter graph with a distinguished vertex. Form $\KK=\EXT(\GG,\mathcal{D})$ by identifying the distinguished vertex of $\GG$ with $B_j\in V(\mathcal{D})$. Let $A=W(\KK)$ and let $\alpha$ be its standard coset incidence system. Then:
\begin{enumerate}
    \item $\TT(\alpha,\beta)$ is a regular hypertope, for $F_L=B_j$;
    \item If $M\leq A$ is invariant under $\eta(B)$, let $N=\langle\!\langle M\rangle\!\rangle_A$ and set
    \[
    \widetilde\alpha=
    \bigl(A/N,(A_iN/N)_{i\in V(\KK)}\bigr).
    \]
    If $\widetilde\alpha$ is a regular hypertope, then $\TT(\widetilde\alpha,\beta)$ is a regular hypertope, for $F_L=B_j$.
\end{enumerate}
\end{thm}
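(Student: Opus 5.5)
The proof will follow the same pattern as the proof of Theorem~\ref{thm:diagonal-twisting}. I plan to establish $(\beta,\eta)$-admissibility of $\alpha$ and then invoke Theorem~\ref{thm:twisting-hypertope}.

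First, note that $\alpha$, being the standard coset incidence system of the Coxeter group $W(\KK)$, is a regular hypertope. By Lemma~\ref{lem:gammaHomoExtendedGraph}, $\gamma$ acts by label-preserving graph automorphisms of $\KK$. Composing with the map $a_x\mapsto a_{\gamma(b)(x)}$, which preserves the Coxeter presentation of $W(\KK)$, gives $\eta\colon B\to\Aut(A)$. Exactly as in the proof of Theorem~\ref{thm:admissibility-diagonal}, we have $\eta(b)(A_x)=A_{\gamma(b)(x)}$ for every $x\in V(\KK)$. Hence the maximal parabolic subgroups are permuted. The orbits on the type set $V(\KK)$ are:
\begin{itemize}
\item[(i)] the singletons $\{g\}$, for $g\in V(\GG)\setminus\{v\}$;
\item[(ii)] the orbit $V(\mathcal{D})=X_j$, on which $B$ acts transitively, with base point $F_L=B_j$.
\end{itemize}

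Next, I check condition~\eqref{eq:IPO} orbit by orbit. For a singleton orbit $L=\{g\}$, every set $\mathcal{O}_L(M)=B_{I_\beta\setminus M}\cdot g$ equals $\{g\}$, including for $M=\emptyset$, where $B_{I_\beta}\cdot g=\{g\}$. So both sides of~\eqref{eq:IPO} are $\{g\}$ and the condition holds trivially. For the orbit $X_j$, the sets $\mathcal{O}_L(M)$ depend only on the action of $B$ on $X_j$ and on the base point $B_j$, not on the rest of $\KK$. Condition~\eqref{eq:IPO} for this orbit is therefore literally equation~\eqref{eq:IPO-diagonal}. Since $\beta$ is a regular hypertope, $B$ is flag-transitive, and the argument of Theorem~\ref{thm:admissibility-diagonal} via Theorem~\ref{thm:cosetFT} gives $B_{H}B_j\cap B_{K}B_j=(B_H\cap B_K)B_j$. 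This is exactly the required identity. Hence $\alpha$ is $(\beta,\eta)$-admissible, and part~(a) follows from Theorem~\ref{thm:twisting-hypertope}.

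For part~(b), Lemma~\ref{lem:normalclosureClosedUnderGraphAut} shows that $N=\langle\langle M\rangle\rangle_A$ is closed under $\eta$. Lemma~\ref{lem:quotient-admissible} then shows that $\widetilde\alpha$ is $(\beta,\eta)$-admissible. Since $\widetilde\alpha$ is assumed to be a regular hypertope, Theorem~\ref{thm:twisting-hypertope} applies again.

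The only step requiring any care is the well-definedness of $\eta$ as an action on $A$. This amounts to checking that $\gamma(b)$ preserves every label $m_{x,y}$ of $\KK$, including the non-edges with label $2$. The edges inside $\mathcal{D}$ and the copied edges between $V(\GG)\setminus\{v\}$ and $V(\mathcal{D})$ are handled by Lemma~\ref{lem:gammaHomoExtendedGraph}. Non-edges are preserved because $\gamma(b)$ is a bijection that preserves the edge set. Given that lemma, the rest is routine.
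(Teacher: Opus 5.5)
Your proposal is correct and follows essentially the same route as the paper. You establish $(\beta,\eta)$-admissibility by observing that the singleton orbits $\{g\}$, $g\in V(\GG)\setminus\{v\}$, satisfy the orbit intersection property trivially, and that the orbit $V(\mathcal{D})=X_j$ reduces to the flag-transitivity argument of Theorem~\ref{thm:admissibility-diagonal}; you then apply Theorem~\ref{thm:twisting-hypertope}, and handle part~(b) via Lemmas~\ref{lem:normalclosureClosedUnderGraphAut} and~\ref{lem:quotient-admissible}, spelling out in more detail what the paper compresses into the discussion preceding the theorem.
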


\begin{proof}
The graph $\KK$ defines a Coxeter group, so $\alpha$ is a regular hypertope. Lemma~\ref{lem:gammaHomoExtendedGraph} gives the action of $B$, and the discussion before the theorem shows that we will have our admissibility condition. Part~(a) follows from Theorem~\ref{thm:twisting-hypertope}. Part~(b) follows from Lemma~\ref{lem:quotient-admissible} and the same theorem.
\end{proof}

When $\GG$ consists of one vertex, Theorem~\ref{thm:extended-twisting} reduces to Theorem~\ref{thm:diagonal-twisting}.

Again, we now make the connection with the classical results of \cite{ARP}, We refer to~\cite[Section~8B]{ARP} for the details of the construction of the regular polytopes $\delta^{\beta,\GG}$.

\begin{coro}\label{coro:classical-general-twisting}
Let $\delta=(C,(C_i)_{i\in I_\delta})$ be a universal regular $m$-polytope, where
\[
C=\langle c_0,c_1,\ldots,c_{m-1}\rangle
\]
is a string Coxeter group with Coxeter graph $\GG$. Let $\beta=(B,(B_i)_{i\in I_\beta})$ be an abstract regular polytope, and let $\mathcal{D}$ be the Coxeter graph obtained from the $0$-diagonals of $\beta$. Identify the vertex of $\GG$ corresponding to $c_{m-1}$ with the vertex $B_0$ of $\mathcal{D}$, and let $\KK=\EXT(\GG,\mathcal{D})$. If $F_L = B_0$ then,
\[
\delta^{\beta,\GG}\cong\TT(\alpha,\beta),
\]
where $\alpha$ is the standard coset incidence system of $W(\KK)$.
\end{coro}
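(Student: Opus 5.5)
The plan mirrors the proof of Corollary~\ref{coro:twistingGiving2PG}: show that both objects are coset geometries for the same group with the same distinguished generators and the same maximal parabolic subgroups.

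\emph{Step 1: identify the groups.} By the definition of twisting, the group of $\TT(\alpha,\beta)$ is $W(\KK)\rtimes_\eta B$. By~\cite[Section~8B]{ARP}, the group of $\delta^{\beta,\GG}$ is also a semidirect product. Its normal factor is the Coxeter group on generators $c_0,\ldots,c_{m-2}$ together with one copy $c_{m-1}^F$ of the last generator for each vertex $F$ of $\beta$. The Coxeter relations there are as follows:
\begin{itemize}
    \item the $c_i$ with $i<m-1$ satisfy the relations of $\GG$;
    \item each $c_i$ is related to every $c_{m-1}^F$ exactly as $c_i$ is related to $c_{m-1}$ in $\GG$;
    \item the $c_{m-1}^F$ satisfy the relations read off from the $0$-diagonal classes of $\beta$.
\end{itemize}
The group $B$ acts by fixing the $c_i$ and permuting the $c_{m-1}^F$ through its action on vertices. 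Identifying $c_{m-1}^F$ with $a_F$, and $c_i$ with the generator of the corresponding vertex of $V(\GG)\setminus\{v\}$, this is exactly the presentation of $W(\KK)$ given by Construction~\ref{const:graphJoiningTwoGraphs}. Under this identification the $B$-action is exactly $\gamma$ of Lemma~\ref{lem:gammaHomoExtendedGraph}. Hence the two groups are isomorphic by an isomorphism that is the identity on $B$.

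\emph{Step 2: match the generators and the diagram.} Repeat the argument of Corollary~\ref{coro:diagramtwistingGraph}. Since $B$ is transitive on $V(\mathcal{D})$, the group $W(\KK)\rtimes B$ is generated by $c_0,\ldots,c_{m-2}$, $a_{B_0}$ and $\rho_0,\ldots,\rho_{n-1}$. The choice $F_L=B_0$ makes $a_{B_0}$ the base copy of $c_{m-1}$. The elements $\rho_i$ with $i\neq 0$ lie in $B_0$, so they fix $a_{B_0}$ and commute with it. The generator $\rho_0$ conjugates $a_{B_0}$ to $a_{\rho_0B_0}$. This reproduces the distinguished generators $c_0,\ldots,c_{m-2},c_{m-1},\rho_0,\ldots,\rho_{n-1}$ of $\delta^{\beta,\GG}$, in the same string order.

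\emph{Step 3: compare the maximal parabolic subgroups.} This is the step I expect to be the main obstacle. I would compute each $G_i$ from Definition~\ref{def:gen_twist} and check that it equals the subgroup generated by all distinguished generators except the $i$-th one. There are two cases.
\begin{itemize}
    \item For a type in $V(\GG)\setminus\{v\}$, the orbit is a singleton, so $G_i=A_i\rtimes B$. This is generated by all generators except the corresponding $c_i$.
    \item For $i\in I_\beta$, we have $G_i=A_{V(\mathcal{D})\setminus B_i\cdot B_0}\rtimes B_i$. Its $A$-part is generated by the $c_k$ with $k<m-1$ and the $a_F$ with $F\in B_i\cdot B_0$. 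This subgroup is $\langle c_0,\ldots,c_{m-2},\, a_{B_0}^{b}\mid b\in B_i\rangle\rtimes B_i$, which equals $\langle c_0,\ldots,c_{m-1},\rho_k \mid k\neq i\rangle$, because $B_i=\langle\rho_k\mid k\neq i\rangle$.
\end{itemize}

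The delicate point is the type $v$, that is, the orbit $V(\mathcal{D})$ itself. There I would need to confirm that the parabolic subgroup $A_{V(\mathcal{D})}\rtimes B$ equals the subgroup generated by everything except $c_{m-1}$. It is enough to note that its $A$-part is generated by the $c_k$ with $k<m-1$. I would also check carefully the convention in~\cite{ARP} for which vertex $c_{m-1}$ is identified with. Once the groups, generators and parabolic subgroups agree, both regular polytopes are the coset geometries of the same data, and therefore $\delta^{\beta,\GG}\cong\TT(\alpha,\beta)$.
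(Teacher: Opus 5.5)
Your proposal is correct and follows the same approach as the paper. You identify $W(\KK)\rtimes_\eta B$ with the classical group, match the distinguished generators (with $a_{B_0}$ playing the role of $c_{m-1}$ because $F_L=B_0$), and check that the maximal parabolic subgroups agree; the paper only asserts these identifications, while your Step 3 actually verifies from Definition~\ref{def:gen_twist} that each $G_i$ is the subgroup generated by all distinguished generators but one.
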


\begin{proof}
The Coxeter group $W(\KK)$ and its standard parabolic subgroups are exactly the group and distinguished parabolic subgroups used in the classical construction. The choice of the vertex $B_0$ gives the required fixed base vertex. Thus the two constructions define the same regular polytope.
\end{proof}

\begin{coro}\label{coro:extensiontwistingGraph}\
Suppose we are in the settings of the hypothesis of Theorem~\ref{thm:extended-twisting}.
Let $I_\beta=\{1,\ldots,n\}$ and set $B=\langle b_i\mid i\in I_\beta\rangle$. Assume also that the Coxeter graph of $B$ is the following:

   \begin{center}
    \begin{tikzpicture}[scale = 0.5]

   \filldraw[black] (2,0) circle (2pt)  node[anchor=north]{$j$};
   \filldraw[black] (7+2,0.8) circle (0pt)  node[anchor=north]{$\vdots$};
    \filldraw[black] (7+2,2) circle (2pt)  node[anchor=south west]{$1$};
         \filldraw[black] (7+2,-2) circle (2pt)  node[anchor=south west]{${n-1}$};
         \filldraw[black] (7+2,4) circle (2pt)  node[anchor=south west]{$2$};
         \filldraw[black] (7+2,-4) circle (2pt)  node[anchor=south west]{${n}$};
    \draw (2,0) -- (7+2,2)node [midway, below] (TextNode) {$m_{j,1}$};
    \draw (2,0) -- (7+2,4)node [midway,above] (TextNode) {$m_{j,2}$};
    \draw (2,0) -- (7+2,-2)node[midway,above] (TextNode) {$m_{j,n-1}$};
    \draw (2,0) -- (7+2,-4)node[midway,below] (TextNode) {$m_{j,n}$};
    \draw (7.3+2.2,0) ellipse (2.6cm and 5cm);

    \end{tikzpicture}
    \end{center}

Let $\GG$ be the finite Coxeter graph

   \begin{center}
    \begin{tikzpicture}[scale = 0.5]
   \filldraw[black] (2,0) circle (2pt)  node[anchor=north]{$k$};
   \filldraw[black] (7+2,0.8) circle (0pt)  node[anchor=north]{$\vdots$};
    \filldraw[black] (7+2,2) circle (2pt)  node[anchor=south west]{${-2}$};
         \filldraw[black] (7+2,-2) circle (2pt)  node[anchor=south west]{${-m+1}$};
         \filldraw[black] (7+2,4) circle (2pt)  node[anchor=south west]{${-1}$};
         \filldraw[black] (7+2,-4) circle (2pt)  node[anchor=south west]{${-m}$};
    \draw (2,0) -- (7+2,2)node [midway, below] (TextNode) {$m_{k,-2}$};
    \draw (2,0) -- (7+2,4)node [midway,above] (TextNode) {$m_{k,-1}$};
    \draw (2,0) -- (7+2,-2)node[midway,above] (TextNode) {$m_{k,-m-1}$};
    \draw (2,0) -- (7+2,-4)node[midway,below] (TextNode) {$m_{k,-m}$};
    \draw (7.3+2.2,0) ellipse (2.6cm and 5cm);

    \end{tikzpicture}
    \end{center}
with vertex set $I_\GG=\{-m,\ldots,-1\}$, with selected vertex $k\in I_\GG$ of $\GG$ such that it will be identified with the vertex $B_j$ of $\mathcal{D}$.
Let $A=W(\EXT(\GG,\mathcal{D}))$ be the extended Coxeter group, and set $F_L = B_j$. Then the Coxeter diagram of the hypertope $\TT(\alpha,\beta)$ is
   \begin{center}
    \begin{tikzpicture}[scale = 0.5]
    
   \filldraw[black] (-2,0) circle (2pt)  node[anchor=north]{$ k $};
   \filldraw[black] (2,0) circle (2pt)  node[anchor=north]{$j$};
   \filldraw[black] (7+2,0.8) circle (0pt)  node[anchor=north]{$\vdots$};
    \filldraw[black] (7+2,2) circle (2pt)  node[anchor=south west]{$2$};
         \filldraw[black] (7+2,-2) circle (2pt)  node[anchor=south west]{${n-1}$};
         \filldraw[black] (7+2,4) circle (2pt)  node[anchor=south west]{$1$};
         \filldraw[black] (7+2,-4) circle (2pt)  node[anchor=south west]{${n}$};
    
    \filldraw[black] (-7-2,0.8) circle (0pt)  node[anchor=north]{$\vdots$};
    \filldraw[black] (-7-2,2) circle (2pt)  node[anchor=south east]{$-2$};
         \filldraw[black] (-7-2,-2) circle (2pt)  node[anchor=south east]{${-m+1}$};
         \filldraw[black] (-7-2,4) circle (2pt)  node[anchor=south east]{$-1$};
         \filldraw[black] (-7-2,-4) circle (2pt)  node[anchor=south east]{$-{m}$};
    \draw (2,0) -- (-2,0)node [midway,above] (TextNode) {$2m$};
     \draw (2,0) -- (7+2,2)node [midway, below] (TextNode) {$m_{j,1}$};
    \draw (2,0) -- (7+2,4)node [midway,above] (TextNode) {$m_{j,2}$};
    \draw (2,0) -- (7+2,-2)node[midway,above] (TextNode) {$m_{j,n-1}$};
    \draw (2,0) -- (7+2,-4)node[midway,below] (TextNode) {$m_{j,n}$};
    \draw (7.3+2.2,0) ellipse (2.6cm and 5cm);

     \draw (-2,0) -- (-7-2,2)node [midway, below] (TextNode) {$m_{k,-2}$};
    \draw (-2,0) -- (-7-2,4)node [midway,above] (TextNode) {$m_{k,-1}$};
    \draw (-2,0) -- (-7-2,-2)node[midway,above] (TextNode) {$m_{k,-m-1}$};
    \draw (-2,0) -- (-7-2,-4)node[midway,below] (TextNode) {$m_{k,-m}$};
    \draw (-7.3-2.2,0) ellipse (2.6cm and 5cm);

    \end{tikzpicture}
    \end{center}
where $m = \lambda([B_j,\rho_jB_j])$.
\end{coro}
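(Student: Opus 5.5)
We follow the proof of Corollary~\ref{coro:diagramtwistingGraph}, now with the generators coming from $\GG$. By Theorem~\ref{thm:extended-twisting}, $\TT(\alpha,\beta)$ is a regular hypertope with automorphism group $A\rtimes_\eta B$, where $A=W(\KK)$ and $\KK=\EXT(\GG,\mathcal{D})$. We present this semidirect product by the generators $a_x$ for $x\in V(\KK)$ and $b_i$ for $i\in I_\beta$. The relations are those of $A$, those of $B$, and $b\,a_x\,b^{-1}=\eta(b)(a_x)$. The action $\eta$ fixes every $a_g$ with $g\in V(\GG)\setminus\{v\}$. On the other generators it acts transitively on $\{a_F\mid F\in V(\mathcal{D})\}$. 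So every $a_F$ is a $B$-conjugate of $a_{B_j}=a_v$. Hence $A\rtimes_\eta B$ is generated by $\{a_g\mid g\in V(\GG)\}\cup\{b_i\mid i\in I_\beta\}$, where the vertex $k=v$ of $\GG$ carries $a_{B_j}$.

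Next we compute the orders of products of pairs of these generators, in four cases.

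\emph{Two $b$'s.} The products $b_ib_{i'}$ have the same orders as in $B$.

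\emph{Two $a$'s.} For $g,g'\in V(\GG)$, the generators $a_g,a_{g'}$ are vertices of $\KK$ joined exactly as in $\GG$. By Construction~\ref{const:graphJoiningTwoGraphs} the edges of $\GG$ are kept with their labels. So $a_ga_{g'}$ has order $m_{g,g'}$ in $W(\KK)$, hence also in the semidirect product, since $A$ embeds in it.

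\emph{$a_{B_j}$ and $b_i$.} These are handled exactly as in Corollary~\ref{coro:diagramtwistingGraph}. For $i\neq j$ we have $b_i\in B_j$, so $b_i$ commutes with $a_{B_j}$. For $i=j$ we use $(a_{B_j}\,b_ja_{B_j}b_j)^m=1$ with $m=\lambda([B_j,b_jB_j])$. This gives $(a_{B_j}b_j)^{2m}=1$. The order is exactly $2m$, because $a_{B_j}a_{b_jB_j}$ has order exactly $m$ in the Coxeter group $W(\KK)$.

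\emph{$a_g$ with $g\neq v$ and any $b_i$.} Since $\eta(b_i)$ fixes $a_g$, the two generators commute, so there is no edge.

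Together these give the claimed diagram: $\GG$ and the diagram of $B$ joined by the single edge $k$--$j$ with label $2m$.

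We expect the main obstacle to be a consistency check that the presentation above actually uses. In $W(\KK)$ each $g\in V(\GG)\setminus\{v\}$ is joined to the other vertices $F$ of $\mathcal{D}$, with the label $m_{g,k}$. We must verify that this relation $(a_ga_F)^{m_{g,k}}=1$ is a consequence of the displayed Coxeter relations, and is not an extra relation. Write $F=bB_j$ and conjugate the relation $(a_ga_{B_j})^{m_{g,k}}=1$ by $b$. This yields $(a_ga_F)^{m_{g,k}}=1$, using that $a_g$ is $B$-invariant. So the edges added in the extension are exactly the $B$-translates of the edges of $\GG$ at $v$, and they add no new relations among the chosen generators. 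We will also record that the diagram here means the orders of pairwise products, as in the paper's definition. Whether $A\rtimes_\eta B$ is itself the Coxeter group on this diagram is not required; as in the earlier corollary, the orders of the generator products are what we report.
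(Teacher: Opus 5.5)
Your proposal is correct and is exactly the argument the paper intends; the paper's proof only says it follows the proof of Corollary~\ref{coro:diagramtwistingGraph}, and you carry that argument out. The additions you make are right: each $a_g$ with $g\in V(\GG)\setminus\{v\}$ is $\eta(B)$-fixed and so commutes with every $b_i$, and the products $a_ga_{g'}$ keep their $\GG$-orders in $W(\KK)$.

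One small point to tighten: knowing that $(a_{B_j}b_j)^2=a_{B_j}a_{b_jB_j}$ has order exactly $m$ only shows that $a_{B_j}b_j$ has order $2m$ or, when $m$ is odd, possibly $m$. Exclude the latter by noting that $a_{B_j}b_j$ maps to $b_j\neq 1$ under the projection $A\rtimes_\eta B\to B$, so its order is even.
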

\begin{proof}
    This proof follows the exact same arguments as the proof of Corollary~\ref{coro:diagramtwistingGraph}.
\end{proof}

\bibliographystyle{ieeetr}
\bibliography{refs}

\end{document}